\documentclass[11pt,oneside]{amsart}
\usepackage{lmodern}
\usepackage{lmodern}
\usepackage[T1]{fontenc}
\usepackage[utf8]{inputenc}
\usepackage{xcolor}
\usepackage{mathtools}
\usepackage{enumitem}
\usepackage{amstext}
\usepackage{amsthm}
\usepackage{amssymb}
\usepackage[a4paper]{geometry}
\usepackage{microtype}
\usepackage[bookmarks=false,
 breaklinks=false,pdfborder={0 0 1},backref=false,colorlinks=true]
 {hyperref}
\hypersetup{
 linkcolor=blue,citecolor=blue,urlcolor=blue}

\makeatletter
\numberwithin{equation}{section}
\numberwithin{figure}{section}
\usepackage{amsthm}\usepackage{mathrsfs}

\newcommand{\R}{\mathbb{R}}

\newcommand{\avg}[1]{\left\langle #1\right\rangle}
\newcommand{\AS}{\mathcal{A}_{\mathcal S}}

\newcommand{\dd}{d}

\makeatother

\providecommand\theoremname{Theorem}
\theoremstyle{plain}
\newtheorem{thm}{\protect\theoremname}
\providecommand\lemmaname{Lemma}
\newtheorem{lem}[thm]{\protect\lemmaname}
\providecommand\propositionname{Proposition}
\newtheorem{prop}[thm]{\protect\propositionname}
\providecommand\corollaryname{Corollary}
\newtheorem{cor}[thm]{\protect\corollaryname}
\begin{document}
\title[Entropy and Orlicz endpoint estimates and the dual M-W estimate]{On Entropy and Orlicz endpoint estimates and the dual Muckenhoupt-Wheeden
estimate}
\author{Israel P. Rivera-Ríos}
\address{Departamento de Análisis Matemático, Estadística e Investigación Operativa
y Matemática Aplicada. Facultad de Ciencias. Universidad de Málaga
(Málaga, Spain).}
\email{israelpriverarios@uma.es}
\begin{abstract}
In this paper the incomparability of certain Rahm entropy maximal
functions introduced by Rahm \cite{Ra} and Orlicz maximal functions
that arise in quantitative endpoint estimates for Calderón-Zygmund
operators is established. Also the dual Muckenhoupt-Wheeden two-weight
estimates introduced by Lerner, Ombrosi and Pérez \cite{LOP} are
revisited, improving the possible choices for the maximal function
in the left hand side of the inequality, via a transference principle
from two-weight endpoint inequalities.
\end{abstract}

\maketitle

\section{Introduction and main results}\label{sec:IMR}

We say that $w$ is a weight if it is a non-negative, locally integrable
function. Quantitative weighted estimates have been a fruitful field
of study in the last fifteen years, having as a cornerstone the nowadays
called $A_{2}$ theorem \cite{H} which states that if $T$ is a Calderón-Zygmund
operator then for $1<p<\infty$
\[
\|Tf\|_{L^{p}(w)}\leq c_{T,p}[w]^{\max\left\{ 1,\frac{1}{p-1}\right\} }_{A_{p}}\|f\|_{L^{p}(w)}
\]
where $[w]_{A_{p}}=\sup_{Q}\left(\frac{1}{|Q|}\int_{Q}w\right)\left(\frac{1}{|Q|}\int_{Q}w^{-\frac{1}{p-1}}\right)^{p-1}<\infty$
where the supremum is taken over cubes. This estimate is sharp in
the sense that the exponents for which it holds for each $w\in A_{p}$
cannot be replaced by smaller ones.

In the case $p=1$, Lerner, Ombrosi and Pérez \cite{LOPA1} showed
that 
\begin{equation}
\sup_{t>0}tw\left(\left\{ x\in\mathbb{R}^{n}:|Tf(x)|>t\right\} \right)=\|Tf\|_{L^{1,\infty}(w)}\leq c_{T}[w]_{A_{1}}\log(e+[w]_{A_{1}})\|f\|_{L^{1}(w)}.\label{eq:w11Sharp}
\end{equation}
Later on, Lerner, Nazarov and Ombrosi \cite{LNO} showed that actually
this estimate is sharp.

A type of inequalities related to \eqref{eq:w11Sharp}, that have
attracted the attention of a number of researchers are two weight
inequalities for pairs of weights $(w,u)$ 
\[
\|Tf\|_{L^{1,\infty}(w)}\leq c_{T}\|f\|_{L^{1}(u)}
\]
where $u=\tilde{M}w$ for some suitable maximal operator $\tilde{M}$.
The motivation to study this kind of inequality comes from the seminal
work of Fefferman and Stein \cite{FS} in which they showed that
\begin{equation}
\|Mf\|_{L^{1,\infty}(w)}\leq c_{n}\|f\|_{L^{1}(Mw)}.\label{eq:FS}
\end{equation}
That gave rise to the so called Muckenhoupt-Wheeden conjecture, namely,
does \eqref{eq:FS} hold with $M$ replaced by the Hilbert transform?
Such a question was answered in the negative by Reguera \cite{R}
for dyadic models and also by the same author and Thiele \cite{RT}
for the Hilbert transform. Then it became a natural question to consider
suitable maximal operators larger than $M$ such that the inequality
holds. Such a direction was, probably first, pursued by Pérez \cite{P},
who showed that 
\begin{equation}
\|Tf\|_{L^{1,\infty}(w)}\leq c_{T,\varepsilon}\|f\|_{L^{1}(M_{L(\log L)^{\varepsilon}}w)}.\label{eq:PerezLogepsilon}
\end{equation}
Now we provide some notation to make the equation above understandable.
If $\Phi$ is a Young function, namely $\Phi:[0,\infty)\rightarrow[0,\infty)$
is convex with $\Phi(0)=0$, we define the localized Luxemburg norm
\[
\|h\|_{\Phi,Q}:=\inf\left\{ \lambda>0:\frac{1}{|Q|}\int_{Q}\Phi\!\left(\frac{|h(x)|}{\lambda}\right)dx\leq1\right\} 
\]
and the corresponding Orlicz maximal operator 
\[
M_{\Phi}h(x):=\sup_{Q\ni x}\|h\|_{\Phi,Q}
\]
where supremum is taken over cubes with sides parallel to the coordinate
axes. We denote by $\overline{\Phi}$ the complementary Young function,
\[
\overline{\Phi}(s):=\sup_{t\geq0}\{st-\Phi(t)\}.
\]
Coming back to our discussion, in \eqref{eq:PerezLogepsilon} $M_{L(\log L)^{\varepsilon}}=M_{\Phi}$
with $\Phi(t)=t\log(e+t)^{\varepsilon}$. Later on, relying upon sparse
domination techniques Domingo-Salazar, Lacey and Rey \cite{DSLR}
showed that for every weight $w$ and every $f\in L^{1}(\R^{n})$,
\begin{equation}
\|Tf\|_{L^{1,\infty}(w)}\leq C_{n,T}\bigl(1+c_{\Phi}\bigr)\|f\|_{L^{1}(M_{\Phi}w)}.\label{eq:DSLR}
\end{equation}
where

\begin{equation}
c_{\Phi}:=\sum^{\infty}_{k=1}\frac{1}{\overline{\Phi}^{-1}(2^{2^{k}})}.\label{eq:cPhi}
\end{equation}
Note that if we set
\[
L_{1}(t):=\log(e+t),\qquad L_{j+1}(t):=\log(e+L_{j}(t)),\quad j\geq1
\]
\eqref{eq:DSLR} holds, for instance for $\Phi(t)=t\,L_{2}(t)^{1+\varepsilon}$,
with $\varepsilon>0$. Note that, in the case of interest $0<\varepsilon\leq1$,
$c_{\Phi}\simeq\frac{1}{\varepsilon}$ for this choice of $\Phi$.

Related to this problem, Caldarelli, Lerner and Ombrosi \cite{CLO}
proved that \eqref{eq:DSLR} does not hold if 
\[
\lim_{t\rightarrow\infty}\frac{\Phi(t)}{tL_{2}(t)}=0.
\]
Hence the remaining open problem is to show whether the inequality
holds in the case $\Phi(t)=tL_{2}(t)$.

A different way to consider larger maximal functions to balance the
inequality was introduced by Rahm \cite{Ra} following the entropy
bump approach that had been begun by Treil and Volberg \cite{TV}
and simplified later on in works such as \cite{LS,RS2,RS1}. Let us
recall Rahm's framework. Let $\varepsilon:[1,\infty]\rightarrow[1,\infty]$
be an increasing function with 
\[
\kappa_{\varepsilon}=\sum^{\infty}_{k=-1}\frac{1}{\varepsilon\left(2^{2^{k}}\right)}<\infty.
\]
A possible choice for $\varepsilon$ is $\varepsilon_{\delta}(t)=L_{2}(t)L_{3}(t)^{1+\delta}$
with $\delta>0$. For any cube $Q$ we define 
\[
\rho_{w}(Q)=\frac{1}{w(Q)}\int_{Q}M(\chi_{Q}w)(x)dx
\]
where $M=M_{L}$ is the usual uncentered Hardy-Littlewood maximal
function. If $w(Q)=0$, then $\rho_{w}(Q)=1$. We then define the
following entropy bump maximal function
\[
M_{\varepsilon}w=\sup_{x\in Q}\frac{w(Q)}{|Q|}L_{1}(\rho_{w}(Q))\varepsilon(\rho_{w}(Q)).
\]
For this kind of maximal functions Rahm shows that
\begin{equation}
\|Tf\|_{L^{1,\infty}(w)}\leq C_{n,T}\kappa_{\varepsilon}\|f\|_{L^{1}(M_{\varepsilon}w).}\label{eq:Rahm}
\end{equation}
Rahm points out that ``borderline'' conditions do not seem comparable,
namely
\[
M_{\varepsilon_{\delta}}w(x)\not\simeq M_{\Phi_{1+\rho}}w(x).
\]
where 
\begin{align*}
\Phi_{1+\rho}(t) & =tL_{2}(t)L_{3}(t)^{1+\rho}\\
 & =t\log(e+\log(e+t))[\log(e+\log(e+\log(e+t)))]^{1+\rho}
\end{align*}
Our first results confirm that guess.
\begin{thm}
\label{thm:RahmInfty}There exists a weight $w$ such that for every
$\delta>0$ and every $\rho>0$
\[
M_{\varepsilon_{\delta}}w(x)=\infty
\]
and
\[
M_{\Phi_{1+\rho}}w(x)<\infty
\]
for a set of positive measure. From this it readily follows that there
is no constant $C>0$ such that 
\[
M_{\varepsilon_{\delta}}w(x)\leq CM_{\Phi_{1+\rho}}w(x)
\]
for every $x\in\mathbb{R}$ and every weight $w$.
\end{thm}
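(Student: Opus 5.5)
The plan is to work in $\mathbb{R}$ with an explicit weight that has a singularity at the origin. The singularity should be mild enough that $\Phi_{1+\rho}(w)$ is integrable for every $\rho>0$, but strong enough that $w$ is not in $L\log L$ near $0$. The heuristic is that $\rho_w(Q)$ behaves like $\frac{1}{w(Q)}\|w\|_{L\log L,Q}|Q|$. Rahm's functional can therefore only be finite when $w\in L\log L(Q)$, while every $\Phi_{1+\rho}$ grows strictly slower than $t\log t$. Concretely, I take
\[
w(x)=\frac{1}{x\,\log^{2}(1/x)}\chi_{(0,e^{-1})}(x).
\]
Since $\int_0^{e^{-1}}\frac{dx}{x\log^2(1/x)}=1$, this $w$ is integrable. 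It is bounded on every interval $[a,e^{-1})$ with $a>0$.

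\emph{Step 1: $M_{\Phi_{1+\rho}}w<\infty$ on $E=(1/4,1/3)$.}

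First I check that $\int\Phi_{1+\rho}(w)<\infty$. Near $0$ we have $w\approx 1/x$ up to logarithms, so $L_2(w)\lesssim \log\log(1/x)$ and $L_3(w)\lesssim\log\log\log(1/x)$. Hence
\[
\int\Phi_{1+\rho}(w)\lesssim\int_0^{c}\frac{\log\log(1/x)\,(\log\log\log(1/x))^{1+\rho}}{x\log^2(1/x)}\,dx<\infty,
\]
which I see by substituting $u=\log(1/x)$.

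Now fix $x\in E$ and an interval $Q\ni x$. There are two cases.
\begin{itemize}
\item If $|Q|<1/8$, then $Q\subset(1/8,\infty)$, where $w$ is bounded. So $\|w\|_{\Phi,Q}\le\|w\|_{L^\infty(1/8,\infty)}$.
\item If $|Q|\ge1/8$, I use convexity in the form $\Phi(t/\lambda)\le\Phi(t)/\lambda$ for $\lambda\ge1$. This gives $\|w\|_{\Phi,Q}\le\max\{1,8\int\Phi_{1+\rho}(w)\}$.
\end{itemize}
So $M_{\Phi_{1+\rho}}w$ is bounded on $E$, for every $\rho>0$.

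\emph{Step 2: $M_{\varepsilon_\delta}w=\infty$ on $E$.}

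Take $Q=(0,e^{-1})$, which contains $E$, and note $w(Q)=1$. For $y\in Q$,
\[
M(\chi_Q w)(y)\ge\frac1y\int_0^y w=\frac{1}{y\log(1/y)}.
\]
Therefore $\int_Q M(\chi_Qw)\ge\int_0^{e^{-1}}\frac{dy}{y\log(1/y)}=\infty$, that is, $\rho_w(Q)=\infty$. Since $L_1(\infty)\varepsilon_\delta(\infty)=\infty$ (using $\varepsilon:[1,\infty]\to[1,\infty]$ increasing, with $\varepsilon_\delta(\infty)=\infty$), and $w(Q)/|Q|=e>0$, we get $M_{\varepsilon_\delta}w(x)=\infty$ for every $x\in E$ and every $\delta>0$.

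The failure of any pointwise bound $M_{\varepsilon_\delta}w\le CM_{\Phi_{1+\rho}}w$ then follows at once, since the right-hand side is finite on $E$.

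\emph{Expected obstacle.} The only delicate point is the interpretation of $\rho_w(Q)=\infty$ inside the definition of $M_\varepsilon$, which requires the convention $\varepsilon(\infty)=\infty$. If one prefers to avoid infinite values, I would instead use the intervals $Q_\eta=(\eta,e^{-1})$ with $\eta\to0$. Here $w(Q_\eta)\to1$ while $\rho_w(Q_\eta)\to\infty$, because $\int_{Q_\eta}M(\chi_{Q_\eta}w)\gtrsim\log\log(1/\eta)$. This gives $\sup_\eta\frac{w(Q_\eta)}{|Q_\eta|}L_1(\rho_w(Q_\eta))\varepsilon_\delta(\rho_w(Q_\eta))=\infty$ using only finite values.
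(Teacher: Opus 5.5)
Your proposal is correct and is essentially the paper's proof. It uses the same weight $\frac{1}{x\log^{2}(1/x)}$ localized near the origin, the same integrability of $\Phi_{1+\rho}(w)$ via $u=\log(1/x)$, and the same argument that $\rho_w(Q)=\infty$ via $M(\chi_Q w)(y)\geq\frac{1}{y\log(1/y)}$. The differences are cosmetic: the paper puts its test set $J=(1/2,3/4)$ outside the support of $w$ so short intervals contribute nothing, and it chooses the constant by dominated convergence rather than your bound $\Phi(t/\lambda)\leq\Phi(t)/\lambda$. In your short-interval case you only get $\|w\|_{\Phi_{1+\rho},Q}\leq\|w\|_{L^\infty(1/8,\infty)}/\Phi_{1+\rho}^{-1}(1)$, since $\Phi_{1+\rho}(1)>1$, but this still suffices.
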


\begin{thm}
\label{thm:RahmBlowsSlower}There exists a family of weights $\{w_{N}\}$
such that for every $\delta>0$ and every $\rho>0$, 
\[
M_{\Phi_{1+\rho}}w_{N}(x)\simeq_{\rho}L_{2}(N)L_{3}(N)^{1+\rho},
\]
and 
\[
M_{\varepsilon_{\delta}}w_{N}(x)\simeq_{\delta}L_{2}(N)L_{3}(N)L_{4}(N)^{1+\delta}
\]
in a set of positive measure $J$. Consequently there is no constant
$C$ such that 
\[
M_{\Phi_{1+\rho}}w(x)\leq CM_{\varepsilon_{\delta}}w(x)
\]
for every $x\in\mathbb{R}$ and every weight $w$.
\end{thm}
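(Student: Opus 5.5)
The plan is to use the simplest one–parameter family of concentrated weights on $\mathbb{R}$, namely $w_N:=N\chi_{[0,1/N]}$ with $N\geq e^{e^{e}}$, and to take $J=[1,2]$. For $x\in J$, the only cubes (intervals) $Q\ni x$ that contribute to either maximal function are those meeting $[0,1/N]$. Every such $Q$ has length $\ell=|Q|\geq 1-1/N\geq 1/2$. On such intervals everything is explicit, and the two estimates become one–variable computations in $\ell$ and $N$. For the lower bounds it suffices to test the single interval $Q_0=[0,2]$. For the upper bounds I would control every admissible $Q$ by a quantity that is non-increasing in $\ell\geq 1/2$, so that it is dominated by its value at $\ell\simeq 1$.

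\emph{Orlicz side.} For $Q$ admissible, $Q\cap[0,1/N]$ has measure $a\leq 1/N$, and $w_N\chi_Q=N\chi_{Q\cap[0,1/N]}$. By the definition of the Luxemburg norm,
\[
\|w_N\|_{\Phi,Q}=\frac{N}{\Phi^{-1}(\ell/a)}.
\]
Since $\Phi_{1+\rho}^{-1}(s)\simeq_\rho s/\bigl(L_2(s)L_3(s)^{1+\rho}\bigr)$, we get $\|w_N\|_{\Phi,Q}\simeq \frac{Na}{\ell}L_2(\ell/a)L_3(\ell/a)^{1+\rho}$. This is increasing in $a$, so it is maximized at $a=1/N$. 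Then $\frac1\ell L_2(N\ell)L_3(N\ell)^{1+\rho}$ is essentially decreasing for $\ell\geq1/2$, because the iterated logarithms grow much more slowly than $\ell$. Hence $\sup_{Q}\simeq_\rho L_2(N)L_3(N)^{1+\rho}$, with equality up to constants attained at $Q_0$.

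\emph{Entropy side.} For $Q_0$ we have $w_N(Q_0)/|Q_0|=1/2$. Moreover $M(\chi_{Q_0}w_N)(y)\simeq\min(N,1/y)$ on $Q_0$, so $\rho_{w_N}(Q_0)\simeq\int_0^2\min(N,1/y)\,dy\simeq \log N$. Therefore
\[
L_1(\rho)\,\varepsilon_\delta(\rho)\simeq L_2(N)\,L_3(N)\,L_4(N)^{1+\delta},
\]
using $L_k(c\log N)\simeq L_{k+1}(N)$. For a general admissible $Q$ of length $\ell$ with $w_N(Q)=Na\leq 1$, I would bound $\int_Q M(\chi_Q w_N)\lesssim Na\,(1+\log(\ell/a))$. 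This gives $\rho_{w_N}(Q)\lesssim 1+\log(\ell/a)$. The quantity
\[
\frac{Na}{\ell}\,L_1(\rho)\,\varepsilon_\delta(\rho)
\]
is then at most $\frac{Na}{\ell}\,L_2(\ell/a)L_3(\ell/a)L_4(\ell/a)^{1+\delta}$. This is again increasing in $a$ and essentially decreasing in $\ell\geq1/2$, so it is $\lesssim_\delta L_2(N)L_3(N)L_4(N)^{1+\delta}$. Since $\rho\mapsto L_1(\rho)\varepsilon_\delta(\rho)$ is increasing, replacing $\rho$ by this upper bound is legitimate.

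The main obstacle I expect is making the monotonicity reductions rigorous with uniform constants. Specifically, one must check that $s\mapsto \frac{1}{s}L_2(s)L_3(s)^{1+\rho}$ and its entropy analogue are (quasi-)decreasing, so that the supremum over $(a,\ell)$ is really achieved near $a=1/N$, $\ell\simeq1$. One must also estimate $\int_Q M(\chi_Qw_N)$ carefully when $Q$ only partially covers $[0,1/N]$. Both reduce to elementary derivative checks of iterated logarithms, after which the constants depend only on $\rho$ and $\delta$ as claimed.

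For the final consequence, suppose $M_{\Phi_{1+\rho}}w\leq CM_{\varepsilon_\delta}w$ held for every weight. Applying it to $w_N$ on $J$ would give
\[
L_2(N)L_3(N)^{1+\rho}\lesssim C\,L_2(N)L_3(N)L_4(N)^{1+\delta},
\]
that is, $L_3(N)^{\rho}\lesssim C L_4(N)^{1+\delta}$. This fails as $N\to\infty$.
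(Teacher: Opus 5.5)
Your proposal is correct and follows essentially the same route as the paper. The ingredients match one for one:
\begin{itemize}
\item the same concentrated weights $N\chi_{(0,1/N)}$, tested on a single interval for the lower bounds;
\item the same explicit Luxemburg norm $N/\Phi^{-1}_{1+\rho}(|I|/|F|)$, combined with the asymptotics of $\Phi^{-1}_{1+\rho}$;
\item the same bound $\rho_{w_N}(I)\simeq 1+\log(|I|/|F|)$, obtained from the explicit formula for $M\chi_F$;
\item the shift $L_k(c\log N)\simeq L_{k+1}(N)$;
\item quasi-monotonicity of $H_\delta(r)/r$ for the upper bounds.
\end{itemize}
The quasi-monotonicity you flag as the main obstacle is exactly what the paper's Lemma~\ref{lem:slow} with $\eta=1/2$ supplies. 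On the Orlicz side you do not even need it: the exact expression $N/\Phi^{-1}_{1+\rho}(\ell/a)$ is already decreasing in $\ell/a\geq N/2$.
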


Now we turn our attention to our next contribution. Lerner, Ombrosi
and Pérez showed in \cite{LOP} that for every $\varepsilon>0$, 
\[
\left\Vert \frac{Tf}{M_{L(\log L)^{1+\varepsilon}}w}\right\Vert _{L^{1,\infty}(w)}\leq c_{T,\varepsilon}\|f\|_{L^{1}}.
\]
Their motivation to pursue such an inequality at that time was that
if the Muckenhoupt-Wheeden conjecture was true then the extrapolation
theorem from \cite{CUP}, would allow them to show that, let's say
for the Hilbert transform,
\[
\int_{\mathbb{R}}|Hf|^{p}w\leq c\int_{\mathbb{R}}|f|^{p}\left(\frac{Mw}{w}\right)^{p}wdx\qquad1<p<\infty
\]
which by duality would imply that for every $1<p<\infty$
\begin{equation}
\int_{\mathbb{R}}\left(\frac{|Hf|}{Mw}\right)^{p'}w\leq c\int_{\mathbb{R}}\left(\frac{|f|}{w}\right)^{p'}wdx\label{eq:DualizedLOP}
\end{equation}
and hence 
\[
\left\Vert \frac{Hf}{Mw}\right\Vert _{L^{1,\infty}(w)}\leq c_{T}\|f\|_{L^{1}}
\]
would be a limiting case of \eqref{eq:DualizedLOP}.

Quite recently Osękowski \cite{O} settled the following result. If
$T$ is a Calderón-Zygmund operator, then if $w\in A_{1}$, 
\[
\left\Vert \frac{Tf}{w}\right\Vert _{L^{1,\infty}(w)}\lesssim[w]_{A_{1}}\|f\|_{L^{1}}
\]
He also provided a direct example showing that for arbitrary weights
\[
\left\Vert \frac{Hf}{M_{\Phi}w}\right\Vert _{L^{1,\infty}(w)}\lesssim\|f\|_{L^{1}}
\]
does not hold if $\lim_{t\rightarrow\infty}\frac{\Phi(t)}{t\log(\log(e^{e}+t))}=0.$

A key to \eqref{eq:DSLR} and \eqref{eq:Rahm} inequalities is sparse
domination. Recall that a dyadic lattice $\mathcal{D}$ is a family
of cubes of $\mathbb{R}^{n}$ having the following properties. 
\begin{enumerate}
\item If $Q\in\mathcal{D}$ then also $\mathcal{D}(Q)\subset\mathcal{D}$
where $\mathcal{D}(Q)$ is the family of all the dyadic descendants
of $Q$.
\item If $Q,Q'\in\mathcal{D}$ they have a common ancestor, namely, there
exists $P\in\mathcal{D}.$ such that $Q,Q'\in\mathcal{D}(P)$.
\item If $K\subset\mathbb{R}^{n}$ is a compact set there exists some $Q\in\mathcal{D}$
such that $K\subset Q$.
\end{enumerate}
Given $\eta\in(0,1)$ and a dyadic lattice $\mathcal{D}$ we say that
$\mathcal{S}\subset\mathcal{D}$ is a $\eta$-sparse family if for
every $Q\in\mathcal{S}$ there exists a subset $E_{Q}\subset Q$ such
that $\eta|Q|\leq|E_{Q}|$ and the $E_{Q}$ are pairwise disjoint.
For a further insight on sparse families we remit the interested reader
to \cite{LN}. Finally if $\mathcal{S}$ is a family of cubes and
$f$ is a locally integrable function we define the sparse operator
$A_{\mathcal{S}}$ as
\[
A_{\mathcal{S}}f(x)=\sum_{Q\in\mathcal{S}}\frac{1}{|Q|}\int_{Q}f\chi_{Q}(x).
\]
Actually \eqref{eq:DSLR} and \eqref{eq:Rahm} were obtained for sparse
operators with uniform constant for every sparse family. 

Our main result is the following.
\begin{thm}
\label{thm:Transf} Let $T$ be a Calderón-Zygmund operator, $w\geq0$
a weight and $u$ be a measurable function such that $0<u<\infty$
$w$-almost everywhere. Let $\eta\in(0,1)$. Assume that there exists
a constant $\kappa_{u,w,\eta}>0$ depending on $u$, $w$ and $\eta$
such that for every non-negative $g\in L_{1}(u)$
\[
\|A_{\mathcal{S}}g\|_{L^{1,\infty}(w)}\leq\kappa_{u,w,\eta}\|g\|_{L^{1}(u)}
\]
uniformly for every $\eta-$sparse family $\mathcal{S}\subset\mathcal{D}$
where $\mathcal{D}$ is any dyadic lattice. Then, for every $f\geq0$,
\[
\left\Vert \frac{Tf}{u}\right\Vert _{L^{1,\infty}(w)}\lesssim_{n,T,\eta}\kappa_{u,w,\eta}\|f\|_{L^{1}}.
\]
\end{thm}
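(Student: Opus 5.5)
The plan is to reduce $T$ to sparse operators, dualize the weak-type quantity so that it is tested against the adjoint of $A_{\mathcal S}$ (which is $A_{\mathcal S}$ itself), and then feed the hypothesis of Theorem~\ref{thm:Transf} into that dual quantity. I have not closed the last step, and say below what I would try.

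\textbf{Step 1 (reduction to sparse operators).} I would use the pointwise sparse domination for Calderón-Zygmund operators: for $f\ge0$ compactly supported and bounded there are $3^{n}$ dyadic lattices $\mathcal D_j$ and $\eta$-sparse families $\mathcal S_j\subset\mathcal D_j$ with
\[
|Tf(x)|\le c_{n,T}\sum_{j=1}^{3^{n}}A_{\mathcal S_j}f(x)\quad\text{a.e.}
\]
(for the sparseness parameter fixed by the domination theorem; other values of $\eta$ are reached by splitting $\mathcal S$ into boundedly many subfamilies). By the quasi-triangle inequality in $L^{1,\infty}(w)$ and a density argument, it suffices to prove
\[
\Big\|\frac{A_{\mathcal S}f}{u}\Big\|_{L^{1,\infty}(w)}\lesssim\kappa_{u,w,\eta}\|f\|_{L^{1}}
\]
uniformly over sparse $\mathcal S$.

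\textbf{Step 2 (duality).} Fix $t>0$ and set $E=\{A_{\mathcal S}f>tu\}$. By monotone convergence it is enough to bound $w(F)$ for every $F\subset E$ with $w(F)<\infty$. Since $A_{\mathcal S}$ is self-adjoint and positive,
\[
t\,w(F)\le\int_F\frac{A_{\mathcal S}f}{u}\,w=\int f\,A_{\mathcal S}\Big(\chi_F\frac{w}{u}\Big)\,dx .
\]
Setting $g_F=\chi_F w/u$, we have $g_F\in L^{1}(u)$ with $\|g_F\|_{L^{1}(u)}=w(F)$. The hypothesis then gives
\[
w\big(\{A_{\mathcal S}g_F>2\kappa\}\big)\le\tfrac12 w(F).
\]
So away from a set $G_F$ of at most half the $w$-measure of $F$, the dual function $h_F=A_{\mathcal S}g_F$ is bounded by $2\kappa$.

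\textbf{Step 3 (Sawyer-type iteration; the main obstacle).} Since $f$ is only in unweighted $L^{1}$, I need $\int f\,h_{F'}\lesssim\kappa\|f\|_{1}$ for a large piece $F'\subset F$. That requires control of $h_{F'}$ on the support of $f$, not merely on $F$, and this is the hard part. The approach I would try first is to exploit the dyadic structure. Write $h_F=\sum_{Q\in\mathcal S}\langle g_F\rangle_Q\chi_Q$ and run a stopping-time selection: let $\mathcal Q$ be the maximal cubes $Q\in\mathcal S$ for which the partial sum $\sum_{P\in\mathcal S,\,P\supseteq Q}\langle g_F\rangle_P$ exceeds $4\kappa$.

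By the weak-type hypothesis, the $w$-measure of $\bigcup\mathcal Q$ intersected with $F$ should be at most $\tfrac12 w(F)$. I would then set $F'=F\setminus\bigcup\mathcal Q$. Since $F'\subset F$, we have $g_{F'}\le g_F$, so for each $Q\in\mathcal S$ not contained in a cube of $\mathcal Q$ the partial sums computed with $g_{F'}$ stay below $4\kappa$. For cubes $Q$ contained in some $R\in\mathcal Q$, the averages $\langle g_{F'}\rangle_Q$ vanish whenever $Q\cap F'=\emptyset$. The remaining contributions have to be absorbed using the disjoint sets $E_Q$ from sparseness.

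If this works, it yields $\|h_{F'}\|_{\infty}\lesssim\kappa$ on all of $\mathbb R^{n}$ with $w(F')\ge\tfrac12 w(F)$. Since $F'\subset E$, Step 2 applied to $F'$ gives $t\,w(F')\le\int f\,h_{F'}\lesssim\kappa\|f\|_{1}$, hence $w(F)\le2w(F')\lesssim\kappa\|f\|_{1}/t$. Taking the supremum over $F$ and $t$ finishes the proof.

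The delicate point, which I have not verified, is the claim that removing the stopping cubes costs at most half of $w(F)$ while making $h_{F'}$ globally bounded. If the stopping argument fails, I would instead try a Calderón–Zygmund decomposition of $f$ at height $t$, using that for dyadic $P\supsetneq Q_j$ the bad part has $\int_P b_j=0$, so $A_{\mathcal S}b$ is supported in $\bigcup Q_j$. One would then treat the good part, which is bounded by $2^{n}t$, through the same dual estimate.
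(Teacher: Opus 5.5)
Your architecture is the paper's: sparse domination, then duality via the self-adjointness of $A_{\mathcal S}$ tested against $g_F=\chi_F w/u$, then discarding a large level set of $A_{\mathcal S}g_F$ using the hypothesis. The step you flag as the main obstacle is not one. Your stopping construction already closes it, and sparseness is not used there.

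\textbf{Measure of the discarded set.} On each $Q\in\mathcal Q$ one has $A_{\mathcal S}g_F\ge\sum_{P\supseteq Q}\langle g_F\rangle_P>4\kappa$. Hence $\bigcup\mathcal Q\subseteq\{A_{\mathcal S}g_F>4\kappa\}$, and the hypothesis gives $w(F\setminus F')\le\tfrac14 w(F)$.

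\textbf{Global bound on $A_{\mathcal S}g_{F'}$.} Fix $x$.
\begin{itemize}
\item If $x\notin\bigcup\mathcal Q$, no cube of $\mathcal S$ containing $x$ is selected. So every partial sum at $x$ is at most $4\kappa$, and hence $A_{\mathcal S}g_{F'}(x)\le A_{\mathcal S}g_F(x)\le 4\kappa$.
\item If $x\in R\in\mathcal Q$, each $Q\in\mathcal S$ containing $x$ satisfies either $Q\subseteq R$ or $Q\supsetneq R$. In the first case $\langle g_{F'}\rangle_Q=0$ \emph{always}, not only when $Q\cap F'=\emptyset$, because $F'\cap R=\emptyset$ by construction. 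In the second case, the smallest cube of $\mathcal S$ strictly containing $R$ is unselected by maximality of $R$. Therefore $\sum_{Q\supsetneq R}\langle g_{F'}\rangle_Q\le\sum_{Q\supsetneq R}\langle g_F\rangle_Q\le 4\kappa$.
\end{itemize}
So $A_{\mathcal S}g_{F'}\le 4\kappa$ everywhere, and there are no remaining contributions to absorb with the sets $E_Q$. Then $t\,w(F')\le 4\kappa\|f\|_{1}$ and $w(F)\le\tfrac43 w(F')$ finish the argument.

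\textbf{Relation to the paper.} This is exactly the paper's maximum principle, Lemma~\ref{lem:maxprinciple}: $A_{\mathcal S}(g\chi_{\{A_{\mathcal S}g\le a\}})\le a$ for any finite family of dyadic cubes, sparse or not. The paper proves it using maximal dyadic cubes $P\subseteq\{A_{\mathcal S}g>a\}$ and a point $z\in\tilde P$ lying outside that set. Your $F'$ is $F\cap\{A_{\mathcal S}g_F\le 4\kappa\}$, i.e.\ the paper's $E_N'=E_N\cap F_N$ with $a=4\kappa$ instead of $2\kappa$.

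\textbf{What you still need to add.} You need a reduction guaranteeing that the averages $\langle g_F\rangle_P$ are finite and that maximal selected cubes exist. As written, $g_F=\chi_F w/u$ need not be locally integrable, since $u$ may be small. Follow the paper in either of two ways:
\begin{itemize}
\item Work first with finite $\mathcal S$, then pass to the limit by monotone convergence. Subfamilies are still $\eta$-sparse, so the hypothesis persists.
\item Replace $F$ by $F\cap B(0,N)\cap\{1/N\le u\le N\}$. Then $g_F\in L^1$ and the partial sums along ancestors tend to $0$.
\end{itemize}
The Calder\'on--Zygmund fallback you mention is unnecessary.
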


As we will see in \ref{subsec:CorollariesTP} we have that in particular
this result implies estimates such as
\begin{align*}
\left\Vert \frac{Tf}{M_{L(\log\log L)(\log\log\log L)^{1+\varepsilon}}w}\right\Vert _{L^{1,\infty}(w)} & \lesssim\frac{1}{\varepsilon}\|f\|_{L^{1}}\\
\left\Vert \frac{Tf}{M_{L(\log\log L)^{1+\varepsilon}}w}\right\Vert _{L^{1,\infty}(w)} & \lesssim\frac{1}{\varepsilon}\|f\|_{L^{1}}.
\end{align*}
Consequently, it remains an open question whether 
\[
\left\Vert \frac{Tf}{M_{L(\log\log L)}w}\right\Vert _{L^{1,\infty}(w)}\lesssim\|f\|_{L^{1}}
\]
is true or not. 

At this point it is worth noting that Theorem \ref{thm:Transf} holds
as well for maximal Calderón-Zygmund operators since Theorem \ref{thm:Transf}
is actually a direct corollary of a result with a similar statement
that holds for sparse operators. We provide further details in Subsection
\ref{subsec:CorollariesTP}.

The remainder of the paper is organized as follows. In Section \ref{sec:IncompMaxFunct}
we present the proofs of the incomparability results for the Orlicz
and the entropy Rahm maximal functions together with some lemmata
required for them. Section \ref{sec:TP+corollaries} is devoted to
establish Theorem \ref{thm:Transf}.

\section{The incomparability of the Orlicz Maximal functions and the Entropy
Rahm Maximal Functions}\label{sec:IncompMaxFunct}

\subsection{Some notation and technical lemmata}

Recall that as we mentioned a few lines above, we shall call
\[
L_{1}(t)=\log(e+t),\qquad L_{j+1}(t)=\log(e+L_{j}(t)),\quad j\geq1.
\]
Accordingly we consider
\[
\Phi_{\alpha}(t)=tA_{\alpha}(t),\qquad A_{\alpha}(t)=L_{2}(t)L_{3}(t)^{\alpha}.
\]
For the entropy bump we choose 
\[
\varepsilon_{\delta}(s)=L_{2}(s)L_{3}(s)^{1+\delta},\qquad\delta>0,
\]
and we define 
\[
B_{\delta}(s)=L_{1}(s)\varepsilon_{\delta}(s)=L_{1}(s)L_{2}(s)L_{3}(s)^{1+\delta}.
\]
With the definitions above at our disposal we are going to be dealing
with the comparability of 
\[
M_{\Phi_{\alpha}}w(x)=\sup_{x\in I}\|w\|_{\Phi_{\alpha},I}
\]
 {}
\[
M_{\varepsilon_{\delta}}w(x)=\sup_{x\in I}\avg{w}_{I}B_{\delta}(\rho_{w}(I)).
\]
 For notational convenience we introduce as well the following function
\[
H_{\delta}(t):=L_{2}(t)L_{3}(t)L_{4}(t)^{1+\delta}.
\]
Our first technical lemma shows that multiplying the argument of a
function built upon $L_{i}$ functions as the ones above by a fixed
positive constant changes essentially by a multiplicative factor its
value.
\begin{lem}
\label{lem:slow} Let 
\[
F(t)=\prod^{m}_{j=1}L_{j}(t)^{a_{j}},\qquad a_{j}\geq0,
\]
with $m<\infty$. For every $\eta>0$ there is a constant $C=C(F,\eta)$
such that 
\[
\frac{F(s)}{F(t)}\leq C\left(\frac{s}{t}\right)^{\eta},\qquad s\geq t\geq2.
\]
In particular, multiplication of the argument by a fixed positive
constant changes $A_{\alpha}$, $B_{\delta}$, or $H_{\delta}$ only
by a multiplicative constant for large arguments. 
\end{lem}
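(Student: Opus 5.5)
Since $F$ is a finite product of powers $L_j^{a_j}$ with $a_j\ge0$, and $\eta>0$ is arbitrary, it suffices to prove the bound for a single factor $L_j(t)^{a}$ with $a>0$. Indeed, if $L_j(s)^{a_j}/L_j(t)^{a_j}\le C_j (s/t)^{\eta/m}$ for each $j$, multiplying the $m$ estimates gives the claim with $C=\prod_j C_j$; factors with $a_j=0$ are identically $1$. Moreover $L_j^{a}$ satisfies the bound with exponent $\eta$ as soon as $L_j$ satisfies it with exponent $\eta/a$. So the whole matter reduces to showing that for each $j\ge1$ and each $\eta>0$,
\[
\frac{L_j(s)}{L_j(t)}\le C_{j,\eta}\left(\frac{s}{t}\right)^{\eta},\qquad s\ge t\ge2 .
\]

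To prove this I consider $g(t)=t^{-\eta}L_j(t)$ on $[2,\infty)$. The desired inequality is equivalent to $g(s)\le C g(t)$ for $s\ge t\ge 2$, which holds if $g$ is eventually decreasing and bounded above and below on compact sets. Differentiating, the chain rule gives
\[
L_j'(t)=\prod_{i=1}^{j}\frac{1}{e+L_{i-1}(t)}
\]
with $L_0(t)=t$. Hence $tL_j'(t)/L_j(t)\le t/\bigl((e+t)L_j(t)\bigr)\le 1/L_j(t)\to0$. Therefore $(\log g)'=-\eta/t+L_j'/L_j<0$ once $L_j(t)>1/\eta$, say for $t\ge t_0(j,\eta)$.

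On $[t_0,\infty)$, $g$ is decreasing, so the ratio is at most $1$ when $t_0\le t\le s$. On $[2,t_0]$, $g$ is continuous and positive, so $g(s)/g(t)\le \max_{[2,t_0]}g\,/\,\min_{[2,t_0]}g$ in the mixed or lower cases, using $g(s)\le \max(g(t_0),\max_{[2,t_0]}g)$. This finishes the bound. The only mildly delicate step is the logarithmic-derivative estimate, and it is routine.

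For the ``in particular'' part, fix $c>0$. If $c\ge1$, apply the lemma with $s=ct$ to get $F(ct)\le Cc^{\eta}F(t)$ for $t\ge2$. The reverse inequality $F(ct)\ge F(t)$ holds by monotonicity of the $L_j$. If $c<1$, apply the same argument with the roles of $t$ and $ct$ swapped, for $ct\ge2$. Since $A_\alpha$, $B_\delta$ and $H_\delta$ are products of this form, they change only by multiplicative constants under $t\mapsto ct$ for large arguments.
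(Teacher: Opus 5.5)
Your proof is correct, but it takes a different route from the paper. You reduce to a single factor $L_j$ by splitting $\eta$ among the $m$ factors and rescaling by $a_j$. You then use the logarithmic-derivative bound $tL_j'(t)/L_j(t)\le 1/L_j(t)\to0$ to show that $t^{-\eta}L_j(t)$ is eventually decreasing, and close by compactness on $[2,t_0]$. This is the standard Potter-type argument for slowly varying functions.

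The paper instead works additively. It writes $s=qt$ with $q\geq1$ and shows by induction, using that $u\mapsto\log(e+u)$ is $1/e$-Lipschitz, that $0\le L_j(qt)-L_j(t)\le\log q$. Hence $L_j(s)/L_j(t)\le C_j(1+\log q)$, and therefore $F(s)/F(t)\le C_F(1+\log q)^{A}$ with $A=\sum_j a_j$; the elementary bound $(1+\log q)^A\lesssim_{A,\eta}q^\eta$ then finishes.

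The paper's route avoids differentiation and the compactness step, and treats all factors at once without splitting $\eta$. It also yields the stronger, explicit intermediate estimate $F(s)/F(t)\lesssim(1+\log(s/t))^{A}$, uniformly in $t\geq2$. Your route gives a less explicit constant (a max/min ratio on $[2,t_0]$), but it applies verbatim to any positive $C^1$ function $f$ with $tf'(t)/f(t)\to0$.

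Your treatment of the ``in particular'' clause is fine, including the case of a constant $c<1$ handled by swapping roles and using monotonicity.
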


\begin{proof}
Write $s=qt$ with $q\geq1$. Since 
\[
e+s=e+qt\leq q(e+t),
\]
we have 
\begin{equation}
L_{1}(s)\leq L_{1}(t)+\log q.\label{eq:step1}
\end{equation}
The map $u\mapsto\log(e+u)$ has derivative at most $1/e<1$. Hence,
inductively, 
\begin{equation}
0\leq L_{j}(s)-L_{j}(t)\leq\log q,\qquad j\geq1.\label{eq:step2}
\end{equation}
Indeed, note that for $j=1$ the assertion is \eqref{eq:step1}. Assume
the inequalities hold for a certain $j$. Then we have that 
\[
L_{j+1}(s)-L_{j+1}(t)=\log(e+L_{j}(s))-\log(e+L_{j}(t))
\]
Since $L_{j}(s)-L_{j}(t)\geq0$, by the mean value theorem, there
exists $\tau\in[L_{j}(t),L_{j}(s)]$ such that
\[
\log(e+L_{j}(s))-\log(e+L_{j}(t))=\frac{1}{e+\tau}\left(L_{j}(s)-L_{j}(t)\right)
\]
and \eqref{eq:step2} readily follows from the inductive hypothesis.

Since $L_{j}(t)\geq L_{j}(2)=c_{j}>0$ for $t\geq2$, for some $C_{j}>0$,
\[
\frac{L_{j}(s)}{L_{j}(t)}\leq C_{j}(1+\log q).
\]
Calling $A=\sum_{j}a_{j}$, then 
\[
\frac{F(s)}{F(t)}\leq C_{F}(1+\log q)^{A}.
\]
Now, for every $\eta>0$, 
\[
(1+\log q)^{A}\leq C_{A,\eta}q^{\eta},\qquad q\geq1,
\]
which proves the desired estimate.
\end{proof}

Our second technical Lemma presents the relation between $H_{\delta}$
and $B_{\delta}$. This result will be crucial in the sequel.
\begin{lem}
\label{lem:shift} Let $\delta>0$. If 
\begin{align*}
H_{\delta}(t) & =L_{2}(t)L_{3}(t)L_{4}(t)^{1+\delta}\\
B_{\delta}(s) & =L_{1}(s)L_{2}(s)L_{3}(s)^{1+\delta}
\end{align*}
 then, for every fixed $C>0$ and $r\geq2$, 
\[
B_{\delta}\bigl(C(1+\log r)\bigr)\simeq_{C,\delta}H_{\delta}(r).
\]
\end{lem}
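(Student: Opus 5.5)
The strategy is to compare the two sides factor by factor, after setting $s=C(1+\log r)$. For $r\geq2$ we have $1+\log r\simeq L_{1}(r)$. Indeed, $L_{1}(r)=\log(e+r)$, and both $1+\log r$ and $\log(e+r)$ are bounded above and below by positive constants for $r$ in a compact range. For large $r$ their ratio tends to $1$. Hence there are constants $0<c_{1}\leq c_{2}$, depending only on $C$, such that
\[
c_{1}L_{1}(r)\leq s\leq c_{2}L_{1}(r),\qquad r\geq2 .
\]

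First I handle large $s$. There I would apply the monotonicity of each $L_{j}$ together with Lemma \ref{lem:slow}, applied to $F=L_{1}$, $L_{2}$ and $L_{3}$. This gives $L_{j}(s)\simeq_{C}L_{j}(L_{1}(r))$ for $j=1,2,3$. The lemma as stated needs both arguments to be at least $2$. When $s$ or $L_{1}(r)$ lies below $2$ (or below whatever threshold is needed), both quantities range over a compact interval on which every $L_{j}$ is bounded above and below by positive constants. So that case is trivial.

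Next comes the key identity. I would check that $L_{j}(L_{1}(r))=\log(e+L_{1}(r))=L_{2}(r)$ when $j=1$, and more generally $L_{j}(L_{1}(r))=L_{j+1}(r)$; this follows directly from the recursive definition, by induction on $j$. Therefore
\[
B_{\delta}(s)=L_{1}(s)L_{2}(s)L_{3}(s)^{1+\delta}\simeq L_{2}(r)L_{3}(r)L_{4}(r)^{1+\delta}=H_{\delta}(r),
\]
where the exponent $1+\delta$ contributes only a constant power of the comparability constant. That constant therefore depends on $C$ and $\delta$, as claimed.

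The only delicate point is the lower bound when $C$ is small, since then $s$ may fall below $2$ and Lemma \ref{lem:slow} only controls the ratio in one direction. I would handle this by bounding each $L_{j}$ below by $1$ on $[0,\infty)$: we have $L_{j}\geq\log e=1$ for every $j$. Combining this with monotonicity, applied in both directions to $c_{1}L_{1}(r)\leq s\leq c_{2}L_{1}(r)$, and using Lemma \ref{lem:slow} for the upper comparison $L_{j}(c_{2}x)\lesssim L_{j}(x)$ and the lower comparison $L_{j}(x)\lesssim L_{j}(c_{1}x)$ when $c_{1}x\geq2$, settles the equivalence uniformly in $r\geq2$.
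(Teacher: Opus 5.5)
Your proof is correct and follows the same factor-by-factor route as the paper: you show $C(1+\log r)\simeq_{C}L_{1}(r)$ and then carry this comparability through the iterated logarithms. The paper does that step by applying $u\mapsto\log(e+u)$ successively, whereas you use the identity $L_{j}(L_{1}(r))=L_{j+1}(r)$ together with Lemma \ref{lem:slow}, monotonicity, and $L_{j}\geq1$ for bounded arguments, which makes explicit details the paper leaves implicit.
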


\begin{proof}
For $r\geq2$, 
\[
L_{1}\bigl(C(1+\log r)\bigr)\simeq L_{2}(r),
\]
and, after applying $u\mapsto\log(e+u)$ successively, 
\[
L_{2}\bigl(C(1+\log r)\bigr)\simeq L_{3}(r),\qquad L_{3}\bigl(C(1+\log r)\bigr)\simeq L_{4}(r).
\]
Multiplying these three equivalences gives the desired equivalence.
\end{proof}

Our next result provides information on computing the inverse of $\Phi_{\alpha}$.
\begin{lem}
\label{lem:inverse} Let $\Phi_{\alpha}(r)=rL_{2}(r)L_{3}(r)^{\alpha}$.
For $r$ sufficiently large, 
\[
\Phi^{-1}_{\alpha}(r)\simeq_{\alpha}\frac{r}{L_{2}(r)L_{3}(r)^{\alpha}}.
\]
\end{lem}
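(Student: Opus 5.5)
The natural approach is to take the candidate inverse
\[
\psi(r):=\frac{r}{A_{\alpha}(r)},\qquad A_{\alpha}(r)=L_{2}(r)L_{3}(r)^{\alpha},
\]
and show that $\Phi_{\alpha}(\psi(r))\simeq r$ for large $r$. Since $\Phi_{\alpha}$ is increasing and $\Phi_{\alpha}(\lambda t)\simeq \lambda\Phi_{\alpha}(t)$ for fixed $\lambda$ (by Lemma \ref{lem:slow}), such a two-sided estimate transfers to $\Phi_{\alpha}^{-1}$.

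\textbf{Step 1: $\psi(r)$ is comparable to $r$ at the level of the slowly varying factors.} We have
\[
\Phi_{\alpha}(\psi(r))=\frac{r}{A_{\alpha}(r)}\,A_{\alpha}(\psi(r)),
\]
so it suffices to prove $A_{\alpha}(\psi(r))\simeq A_{\alpha}(r)$ for $r$ large. The upper bound $A_{\alpha}(\psi(r))\le A_{\alpha}(r)$ follows from monotonicity, since $\psi(r)\le r$ once $A_{\alpha}(r)\ge1$. For the lower bound, note that $A_{\alpha}(r)\le C_\alpha r^{1/2}$ for $r\ge2$; this follows from Lemma \ref{lem:slow} with $t=2$ and $\eta=1/2$. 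Hence $\psi(r)\ge c\,r^{1/2}\ge2$ for large $r$. Then
\[
L_{2}(\psi(r))\ge L_{2}(c\,r^{1/2})\simeq L_{2}(r),
\]
because $L_{1}(c\,r^{1/2})\simeq \tfrac12\log r\simeq L_{1}(r)$ and the outer logarithm absorbs the constant factor. The same holds for $L_{3}$. This gives $A_{\alpha}(\psi(r))\gtrsim_{\alpha} A_{\alpha}(r)$. Alternatively, one can apply Lemma \ref{lem:slow} with $s=r$, $t=\psi(r)$ and a small $\eta$, which gives $A_\alpha(r)/A_\alpha(\psi(r))\le C A_\alpha(r)^{\eta}$. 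That is weaker, so the direct logarithmic comparison is the cleaner route and the one I will use.

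\textbf{Step 2: transfer to the inverse.} From Step 1 there are constants $c_1,c_2>0$ such that $c_1 r\le \Phi_{\alpha}(\psi(r))\le c_2 r$ for large $r$. Applying the increasing function $\Phi_{\alpha}^{-1}$ gives
\[
\Phi_{\alpha}^{-1}(c_1 r)\le \psi(r)\le \Phi_{\alpha}^{-1}(c_2 r).
\]
Replacing $r$ by $r/c_1$ and by $r/c_2$ yields $\psi(r/c_2)\le \Phi_\alpha^{-1}(r)\le \psi(r/c_1)$. Finally, $\psi(\lambda r)\simeq_\lambda \psi(r)$ for fixed $\lambda>0$, since $A_\alpha(\lambda r)\simeq A_\alpha(r)$ by Lemma \ref{lem:slow}; this holds in both directions by swapping the roles of $s$ and $t$ and using monotonicity. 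This proves $\Phi_{\alpha}^{-1}(r)\simeq_{\alpha} r/A_{\alpha}(r)$.

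\textbf{Main obstacle.} The only genuinely delicate point is the lower bound $A_{\alpha}(\psi(r))\gtrsim A_{\alpha}(r)$. It requires knowing that dividing $r$ by the slowly growing factor $A_{\alpha}(r)$ keeps the argument polynomially comparable to $r$, so that iterated logarithms do not notice the difference. Everything else is monotonicity bookkeeping.
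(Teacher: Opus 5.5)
Your proof is correct and takes essentially the same route as the paper. Both test the candidate $r/A_{\alpha}(r)$. Both get $\Phi_{\alpha}(r/A_{\alpha}(r))\le r$ from monotonicity, then show that the iterated logarithms of $r/A_{\alpha}(r)$ and of $r$ are comparable, which gives $\Phi_{\alpha}(r/A_{\alpha}(r))\gtrsim r$. Both then rescale the argument (the paper evaluates at $4r$). The only difference is that you justify $L_{j}(r/A_{\alpha}(r))\simeq L_{j}(r)$ explicitly through $r/A_{\alpha}(r)\geq c\,r^{1/2}$ and Lemma~\ref{lem:slow}. The paper simply asserts the limit $L_{2}(r/A_{\alpha}(r))/L_{2}(r)\to1$.
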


\begin{proof}
First we note that
\begin{align*}
\Phi_{\alpha}\left(\frac{r}{L_{2}(r)L_{3}(r)^{\alpha}}\right) & =\frac{r}{L_{2}(r)L_{3}(r)^{\alpha}}L_{2}\left(\frac{r}{L_{2}(r)L_{3}(r)^{\alpha}}\right)L_{3}\left(\frac{r}{L_{2}(r)L_{3}(r)^{\alpha}}\right)^{\alpha}\\
 & \leq\frac{r}{L_{2}(r)L_{3}(r)^{\alpha}}L_{2}\left(r\right)L_{3}\left(r\right)^{\alpha}=r
\end{align*}
so 
\[
\frac{r}{L_{2}(r)L_{3}(r)^{\alpha}}\leq\Phi^{-1}_{\alpha}(r).
\]
Now we note that
\[
\lim_{r\rightarrow\infty}\frac{L_{2}\left(\frac{r}{L_{2}(r)L_{3}(r)^{\alpha}}\right)}{L_{2}(r)}=1
\]
and $L_{2}\left(\frac{r}{L_{2}(r)L_{3}(r)^{\alpha}}\right)<L_{2}\left(r\right)$
for every $r>1$, then there exists $r_{0,\alpha}>1$ such that for
every $r>r_{0,\alpha}$, 
\[
\frac{L_{2}\left(\frac{r}{L_{2}(r)L_{3}(r)^{\alpha}}\right)}{L_{2}(r)}>\frac{1}{2}.
\]
Analogously there exists $r_{1,\alpha}>1$ such that for every $r>r_{1,\alpha}$
\[
\left(\frac{L_{3}\left(\frac{r}{L_{2}(r)L_{3}(r)^{\alpha}}\right)}{L_{3}(r)}\right)^{\alpha}\geq\frac{1}{2}.
\]
Consequently for $r>\max\{r_{1,\alpha},r_{0,\alpha}\}$, 
\[
\Phi_{\alpha}\left(\frac{r}{L_{2}(r)L_{3}(r)^{\alpha}}\right)\geq\frac{1}{4}r.
\]
Now evaluating in $4r$
\[
\Phi_{\alpha}\left(\frac{4r}{L_{2}(4r)L_{3}(4r)^{\alpha}}\right)\geq r
\]
and that leads us to
\[
\Phi^{-1}_{\alpha}(r)\leq\frac{4r}{L_{2}(4r)L_{3}(4r)^{\alpha}}\leq c_{\alpha}\frac{r}{L_{2}(r)L_{3}(r)^{\alpha}}.
\]
\end{proof}

Before presenting our last lemma, we recall the well known fact that
if $F=[a,b]$ is an interval then
\begin{equation}
M\chi_{F}(x)=\begin{cases}
1, & x\in F,\\[1mm]
{\displaystyle \frac{|F|}{|F|+\operatorname{dist}(x,F)},} & x\notin F,
\end{cases}\label{eq:MchiInt}
\end{equation}
where $M$ stands for the standard uncentered maximal function.
\begin{lem}
\label{lem:lemIndicator}Let $I$ be a finite interval. Let $F\subset I$
be a subinterval, and let $v=A\chi_{F}$ with $A>0$. Write $s_{-}$
and $s_{+}$ for the lengths of the two components of $I\setminus F$
(one of them could be zero), so that 
\[
s_{-}+s_{+}=|I|-|F|.
\]
Then 
\[
\rho_{v}(I)=1+\log\!\left(1+\frac{s_{-}}{|F|}\right)+\log\!\left(1+\frac{s_{+}}{|F|}\right).
\]
Consequently, if 
\[
r=\frac{|I|}{|F|}\geq1,
\]
then 
\begin{equation}
\rho_{v}(I)\simeq1+\log r.\label{eq:rhov(I)}
\end{equation}
with absolute constants, uniformly in the position of $F$ inside
$I$ and in $A$. 
\end{lem}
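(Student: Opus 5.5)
Since $w=v$ vanishes outside $F$, we have $v(I)=A|F|$ and $\chi_I v=A\chi_F$. Hence
\[
\rho_v(I)=\frac{1}{A|F|}\int_I M(A\chi_F)(x)\,dx=\frac{1}{|F|}\int_I M\chi_F(x)\,dx.
\]
The dependence on $A$ cancels by homogeneity of $M$. The plan is to evaluate this integral exactly using the formula \eqref{eq:MchiInt}, splitting $I$ into $F$ and the two components of $I\setminus F$.

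Write $F=[a,b]$, so that $I=[a-s_-,b+s_+]$. On $F$ we have $M\chi_F=1$, which contributes $|F|$ to the integral. On the right component $(b,b+s_+]$ we have $\operatorname{dist}(x,F)=x-b$, so
\[
\int_b^{b+s_+}\frac{|F|}{|F|+x-b}\,dx=|F|\log\!\left(1+\frac{s_+}{|F|}\right),
\]
and symmetrically the left component contributes $|F|\log(1+s_-/|F|)$. Dividing by $|F|$ gives the stated exact formula. If $s_-$ or $s_+$ is zero, the corresponding term vanishes, consistently with the formula.

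For \eqref{eq:rhov(I)}, set $x=s_-/|F|$ and $y=s_+/|F|$, so that $x+y=r-1$. For the upper bound, note that $(1+x)(1+y)\le(1+x+y)^2=r^2$, hence $\rho_v(I)\le 1+2\log r$. For the lower bound, one of $x,y$ is at least $(r-1)/2$, so $\rho_v(I)\ge 1+\log\frac{1+r}{2}$. Since $r\ge1$ we have $\frac{1+r}{2}\ge\sqrt r$, because $(1+r)^2\ge4r$, and therefore $\rho_v(I)\ge 1+\tfrac12\log r$. Both bounds have absolute constants that do not depend on the position of $F$ or on $A$.

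The argument is elementary. The only points requiring care are the cancellation of $A$ and the exact integration against \eqref{eq:MchiInt}, including the convention that $\operatorname{dist}(x,F)$ is measured to the nearer endpoint on each side.
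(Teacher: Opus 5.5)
Your proposal is correct and follows the paper's proof essentially step for step: cancel $A$, integrate the explicit formula \eqref{eq:MchiInt} over $F$ and over the two components of $I\setminus F$, bound the two logarithms above by $2\log r$, and bound them below using the larger of $s_-,s_+$. The only difference is in finishing the lower bound, where you use $\frac{1+r}{2}\ge\sqrt r$; this gives the explicit estimate $\rho_v(I)\ge 1+\tfrac12\log r$ and avoids the paper's case split between $r\ge 2$ and $1\le r\le 2$.
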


\begin{proof}
Note that the scalar $A$ cancels out in the definition of $\rho_{v}$.
Hence we have that,
\[
\rho_{v}(I)=\frac{1}{|F|}\int_{I}M\chi_{F}(x)\,dx.
\]
Without loss of generality we may assume that $I=[0,l]$. Note that
then 
\[
\frac{1}{|F|}\int_{I}M\chi_{F}(x)\,dx=\frac{1}{|F|}\int_{F}M\chi_{F}(x)\,dx+\frac{1}{|F|}\int_{I\setminus F}M\chi_{F}(x)\,dx
\]
By \eqref{eq:MchiInt}, we have that
\[
\frac{1}{|F|}\int_{F}M\chi_{F}(x)\,dx=1.
\]
Now we note that there are two possible situations for $I\setminus F$. 
\begin{itemize}
\item $I\setminus F=J_{1}\cup J_{2}$ where $J_{i}$ are non empty intervals.
In this case we call $s_{-}$ the length of the leftmost one, let
us say $J_{1}$, and $s_{+}$ the length of the rightmost one, $J_{2}$.
\item $I\setminus F=J$ where $J$ is an interval 
\begin{itemize}
\item to the left of $F$. In this case $s_{-}>0$ and $s_{+}=0$. 
\item or to the right of $F$. In this case $s_{+}>0$ and $s_{-}=0$. 
\end{itemize}
\end{itemize}
Note that again by \eqref{eq:MchiInt}, if $x\in J_{1}$ 
\[
M\chi_{F}(x)=\frac{|F|}{|F|+\operatorname{dist}(x,F)}=\frac{|F|}{|F|+(s_{-}-x)}.
\]
Consequently
\begin{align*}
\frac{1}{|F|}\int_{J_{1}}M\chi_{F}(x)\,dx & =\frac{1}{|F|}\int^{s_{-}}_{0}\frac{|F|}{|F|+(s_{-}-x)}\,dx\\
 & =\left[-\log\!\left(|F|+(s_{-}-x)\right)\right]^{s_{-}}_{0}\\
 & =\log\left(\frac{|F|+s_{-}}{|F|}\right)=\log\left(1+\frac{s_{-}}{|F|}\right)
\end{align*}
Analogously, if $x\in J_{2}$ then 
\[
M\chi_{F}(x)=\frac{|F|}{|F|+\operatorname{dist}(x,F)}=\frac{|F|}{|F|+x-(|F|+s_{-})}=\frac{|F|}{x-s_{-}}.
\]
This yields
\begin{align*}
\frac{1}{|F|}\int_{J_{2}}M\chi_{F}(x)\,dx & =\frac{1}{|F|}\int^{|F|+s_{-}+s_{+}}_{|F|+s_{-}}\frac{|F|}{x-s_{-}}dx=\left[\log\left(x-s_{-}\right)\right]^{|F|+s_{-}+s_{+}}_{|F|+s_{-}}\\
 & =\log\left(\frac{|F|+s_{+}}{|F|}\right)=\log\left(1+\frac{s_{+}}{|F|}\right)
\end{align*}
Combining the computations above,
\[
\frac{1}{|F|}\int_{I}M\chi_{F}(x)\,dx=1+\log\!\left(1+\frac{s_{-}}{|F|}\right)+\log\!\left(1+\frac{s_{+}}{|F|}\right)
\]
as we wanted to show.

Finally for the upper bound in \eqref{eq:rhov(I)}, we observe that
\[
\rho_{v}(I)\leq1+2\log\!\left(1+\frac{s_{-}+s_{+}}{|F|}\right)\leq1+2\log r.
\]
For the lower bound, one of $s_{-},s_{+}$ is at least $(|I|-|F|)/2$.
Hence 
\[
\rho_{v}(I)\geq1+\log\!\left(1+\frac{|I|-|F|}{2|F|}\right).
\]
If $r\geq2$, the right-hand side is comparable to $1+\log r$; if
$1\leq r\leq2$, both quantities are comparable to $1$. This proves
\eqref{eq:rhov(I)}. 
\end{proof}

\subsection{A weight with finite Orlicz maximal function, infinite entropy maximal
function}

The remainder of this section is devoted to prove the theorem above.
For that purpose we begin setting 
\[
a=e^{-4}
\]
and defining 
\begin{equation}
w(x)=\frac{\chi_{(0,a)}(x)}{x(\log(1/x))^{2}}.\label{eq:weightOrliczGood}
\end{equation}
Let 
\[
Q_{0}=(0,1),\qquad J=\left(\frac{1}{2},\frac{3}{4}\right).
\]

It is not hard to check that
\[
\int^{a}_{0}w(x)\,dx=\frac{1}{4}.
\]
Consequently, $w$ is locally integrable. We are going to show the
following result
\begin{thm}
\label{thm:finiteOrlInfEnt}For every $\alpha,\delta>0$, 
\[
M_{\Phi_{\alpha}}w(x)<\infty
\]
and 
\[
M_{\varepsilon_{\delta}}w(x)=\infty
\]
for every $x\in J$.
\end{thm}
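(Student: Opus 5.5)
The plan is to treat the two claims separately. For the Orlicz part we use that $\Phi_{\alpha}(w)$ is globally integrable and that every interval containing a point of $J$ and meeting the support of $w$ is long. For the entropy part we exhibit a family of intervals $I_{\eta}\ni x$ along which $\langle w\rangle_{I_\eta}$ stays bounded below while $\rho_{w}(I_{\eta})\to\infty$.

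\emph{Finiteness of $M_{\Phi_\alpha}w$ on $J$.} First we check that $\int_{0}^{a}\Phi_{\alpha}(w)<\infty$. Since $w(x)\simeq \frac{1}{x\log^{2}(1/x)}$, for $x$ near $0$ we have $L_{2}(w(x))\lesssim \log\log(1/x)$ and $L_{3}(w(x))\lesssim\log\log\log(1/x)$. After the substitution $u=\log(1/x)$ the integral becomes essentially
\[
\int^{\infty}_{4}\frac{\log(e+u)\,(\log(e+\log(e+u)))^{\alpha}}{u^{2}}\,du<\infty .
\]
On $[0,a]$ the weight $w$ is bounded away from $0$, so it is enough to control large values of $w$, where Lemma \ref{lem:slow} absorbs the constants.

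Now fix $x\in J$ and an interval $I\ni x$. If $I\cap(0,a)=\emptyset$, then $\|w\|_{\Phi_\alpha,I}=0$. Otherwise $|I|\geq \frac12-a\geq\frac14$. By convexity, $\Phi_{\alpha}(t/\lambda)\leq\Phi_{\alpha}(t)/\lambda$ for $\lambda\geq1$, and therefore
\[
\frac{1}{|I|}\int_{I}\Phi_{\alpha}(w/\lambda)\leq\frac{4}{\lambda}\int^{a}_{0}\Phi_{\alpha}(w)\leq 1
\]
once $\lambda\geq\max\{1,4\int\Phi_\alpha(w)\}$. This gives a uniform bound on $\|w\|_{\Phi_\alpha,I}$, hence $M_{\Phi_{\alpha}}w(x)<\infty$.

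\emph{Infiniteness of $M_{\varepsilon_\delta}w$ on $J$.} For small $\eta>0$ set $I_{\eta}=(\eta,1)\ni x$. Then $\langle w\rangle_{I_\eta}\to\frac14$, so it suffices to show $\rho_{w}(I_{\eta})\to\infty$, because $B_{\delta}$ is increasing and unbounded. The natural antiderivative here is $F(y)=1/\log(1/y)$, which satisfies $F'=w$ on $(0,a)$. For $y\in(2\eta,a)$ we bound the maximal function from below using the interval $(\eta,y)$:
\[
M(w\chi_{I_\eta})(y)\geq\frac{F(y)-F(\eta)}{y-\eta}\geq\frac{F(y)-F(\eta)}{y}.
\]
For $y\geq\sqrt{\eta}$ we have $F(\eta)\leq F(y)/2$. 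Hence
\[
\int_{I_\eta}M(w\chi_{I_\eta})\geq\frac12\int^{a}_{\sqrt\eta}\frac{dy}{y\log(1/y)}=\frac12\Bigl(\log\log\tfrac{1}{\sqrt\eta}-\log 4\Bigr)\longrightarrow\infty .
\]
Since $w(I_\eta)\leq\frac14$, it follows that $\rho_{w}(I_\eta)\to\infty$. Consequently $M_{\varepsilon_\delta}w(x)\geq\langle w\rangle_{I_\eta}B_{\delta}(\rho_w(I_\eta))\to\infty$.

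The main obstacle is this lower bound on $\rho_w$. Note that $w$ is decreasing on $(0,a)$, so the trivial bound $Mw\geq w$ only gives $\int w<\infty$ and is useless. The divergence has to come from the logarithmic tail $\frac{1}{y\log(1/y)}$ of the maximal function. That is why we average from the left endpoint $\eta$ and restrict to $y\geq\sqrt\eta$ to keep $F(y)-F(\eta)\gtrsim F(y)$.
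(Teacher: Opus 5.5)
Your argument is correct. The Orlicz half is the paper's proof: integrability of $\Phi_\alpha(w)$ on $(0,a)$, plus the fact that any interval containing a point of $J$ and meeting $(0,a)$ has length at least $\frac12-a$. The only change is that you replace the dominated convergence step by $\Phi_\alpha(t/\lambda)\le\Phi_\alpha(t)/\lambda$ for $\lambda\ge1$. This gives the explicit bound $M_{\Phi_\alpha}w\le\max\{1,4\int_0^a\Phi_\alpha(w)\}$ on $J$. It also follows directly from $\Phi_\alpha(t)=tA_\alpha(t)$ with $A_\alpha$ increasing, so convexity is not even needed.

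The entropy half differs in a small but real way. In Proposition~\ref{prop:first-ent} the paper works with $Q_0=(0,1)$ itself and averages over $(0,y)$ to get $M(w\chi_{Q_0})(y)\ge \frac{1}{y\log(1/y)}$. It concludes $\rho_w(Q_0)=\infty$ and then reads off $M_{\varepsilon_\delta}w=\infty$. That last step tacitly uses $B_\delta(\infty)=\infty$, i.e.\ the entropy bump being defined on $[1,\infty]$.

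Your truncated intervals $I_\eta=(\eta,1)$ exploit the same $\frac{1}{y\log(1/y)}$ tail. However, each $\rho_w(I_\eta)$ is finite, because $w\chi_{I_\eta}$ is bounded. So you show that the supremum defining $M_{\varepsilon_\delta}w(x)$ diverges along genuinely finite values of $\rho_w$, independently of how $\varepsilon_\delta$ is extended to $\infty$. The cost is the extra $\sqrt{\eta}$ splitting needed to guarantee $F(y)-F(\eta)\ge F(y)/2$. The paper's version is therefore shorter, while yours is more robust.
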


We devote the remainder of the section to settle the result above
which in turn implies Theorem \ref{thm:RahmInfty}.
\begin{lem}
\label{lem:first-orlicz-int} For every fixed $\alpha>0$, 
\[
\int^{a}_{0}\Phi_{\alpha}(w(x))\,dx<\infty.
\]
\end{lem}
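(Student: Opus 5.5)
Since $\Phi_{\alpha}(t)=tL_{2}(t)L_{3}(t)^{\alpha}$, the integral is
\[
\int_{0}^{a}w(x)\,L_{2}(w(x))\,L_{3}(w(x))^{\alpha}\,dx .
\]
The approach is to bound the logarithmic factors pointwise in terms of $\log(1/x)$ and then use the substitution $y=\log(1/x)$.

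\textbf{Step 1 (pointwise bound on $w$).} For $0<x<a=e^{-4}$ we have $\log(1/x)>4$, so $(\log(1/x))^{2}\geq 1$. Hence
\[
w(x)\leq \frac1x ,
\qquad
L_{1}(w(x))\leq \log\Bigl(e+\frac1x\Bigr)\leq C\log\frac1x .
\]
The last inequality holds because $1/x\geq e^{4}$, which gives $\log(e+1/x)\leq \log(2/x)\leq 2\log(1/x)$.

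\textbf{Step 2 (iterated logarithms).} Using the monotonicity of $u\mapsto\log(e+u)$ together with Lemma \ref{lem:slow} (a multiplicative constant in the argument only costs a multiplicative constant), we get
\[
L_{2}(w(x))\lesssim \log\Bigl(e+\log\frac1x\Bigr),
\qquad
L_{3}(w(x))\lesssim \log\Bigl(e+\log\Bigl(e+\log\frac1x\Bigr)\Bigr).
\]

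\textbf{Step 3 (change of variables).} Substitute $y=\log(1/x)$, so that $dx/x=dy$ and $y$ ranges over $(4,\infty)$. The integral is then bounded by a constant times
\[
\int_{4}^{\infty}\frac{\log(e+y)\,\bigl[\log(e+\log(e+y))\bigr]^{\alpha}}{y^{2}}\,dy .
\]
This integral is finite for every $\alpha>0$. Indeed, the numerator is $O(y^{1/2})$, so the integrand is $O(y^{-3/2})$, which is integrable at infinity. The integrand is also bounded on $[4,\infty)$, so there is no problem near $y=4$.

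\textbf{Main point of care.} There is no real obstacle. The only step needing attention is Step 1, where the slack in the $(\log(1/x))^{-2}$ factor must genuinely beat the extra $\log(1/x)$ produced by $L_{1}$. Note, however, that $L_{1}$ does not actually appear in $\Phi_{\alpha}$. Only the slowly growing factors $L_{2}$ and $L_{3}$ occur, so after the substitution the $y^{-2}$ decay comfortably dominates them.
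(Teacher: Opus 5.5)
Your proof is correct and follows essentially the paper's route. The paper substitutes $x=e^{-u}$ and uses $\log w(x)=u-2\log u\le u$ (your $w(x)\le 1/x$) to bound $L_2(w)$ and $L_3(w)$ by iterated logarithms of $u$, reducing to $\int_4^\infty \log(e+u)[\log\log(e^e+u)]^{\alpha}u^{-2}\,du<\infty$ exactly as you do; your closing worry about $L_1$ is moot, as you note, since only $w\,dx=dy/y^2$ and the slowly growing $L_2,L_3$ factors enter.
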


\begin{proof}
Note that if $x=e^{-u}$, 
\[
w(x)=\frac{e^{u}}{u^{2}}\qquad u>4.
\]
Then for every $u>4$, 
\[
\log w(x)=u-2\log u\leq u.
\]
It follows that 
\[
L_{2}(w(x))\lesssim\log(e+u),\qquad L_{3}(w(x))\lesssim\log\log(e^{e}+u).
\]
Thus 
\[
\Phi_{\alpha}(w(x))\,\lesssim e^{u}\frac{\log(e+u)\,[\log\log(e^{e}+u)]^{\alpha}}{u^{2}}
\]
Note that then
\[
\int^{a}_{0}\Phi_{\alpha}(w(x))\,dx=\int^{\infty}_{4}e^{-u}\Phi_{\alpha}(w(e^{-u}))\,du\lesssim\int^{\infty}_{4}\frac{\log(e+u)\,[\log\log(e^{e}+u)]^{\alpha}}{u^{2}}du<\infty
\]
as we wanted to show.
\end{proof}

\begin{prop}
\label{prop:first-orlicz} For every $\alpha>0$ there is $C_{\alpha}<\infty$
such that 
\[
M_{\Phi_{\alpha}}w(x)\leq C_{\alpha},\qquad x\in J.
\]
\end{prop}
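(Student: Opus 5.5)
Fix $x\in J=(\tfrac12,\tfrac34)$ and an interval $I\ni x$; we want a bound on $\|w\|_{\Phi_\alpha,I}$ that does not depend on $I$. Since $w$ is supported in $(0,a)$ with $a=e^{-4}<\tfrac12$, we may assume $I\cap(0,a)\neq\emptyset$, for otherwise $\|w\|_{\Phi_\alpha,I}=0$. Any such $I$ contains $x>\tfrac12$ and meets $(0,a)$, so its length satisfies $|I|\geq x-a\geq \tfrac12-e^{-4}=:c_0>0$. The argument therefore only uses that intervals through $J$ and meeting the support of $w$ are uniformly large. Let
\[
K_\alpha:=\int_0^a\Phi_\alpha(w(x))\,dx<\infty,
\]
which is finite by Lemma \ref{lem:first-orlicz-int}.

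The key estimate uses that $\Phi_\alpha$ is convex with $\Phi_\alpha(0)=0$, so $\Phi_\alpha(t/\lambda)\leq\Phi_\alpha(t)/\lambda$ for $\lambda\geq1$. For $\lambda\geq1$ this gives
\[
\frac{1}{|I|}\int_I\Phi_\alpha\!\left(\frac{w}{\lambda}\right)\leq\frac{1}{\lambda|I|}\int_0^a\Phi_\alpha(w)\leq\frac{K_\alpha}{\lambda c_0}.
\]
Take $\lambda=\max\{1,K_\alpha/c_0\}$. Then the right-hand side is at most $1$, and by the definition of the Luxemburg norm, $\|w\|_{\Phi_\alpha,I}\leq\max\{1,K_\alpha/c_0\}$. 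Taking the supremum over all $I\ni x$ yields the claim with $C_\alpha=\max\{1,K_\alpha/c_0\}$.

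There is no real obstacle here, since the integrability in Lemma \ref{lem:first-orlicz-int} carries the whole argument. The only points to check are that $\Phi_\alpha$ is indeed a Young function, i.e.\ convex, at least up to equivalence for large $t$, and that the convexity scaling inequality applies. If convexity of $tL_2(t)L_3(t)^\alpha$ near $0$ were in doubt, I would instead use directly that $A_\alpha$ is nondecreasing: for $\lambda\geq1$,
\[
\Phi_\alpha(t/\lambda)=\frac{t}{\lambda}A_\alpha(t/\lambda)\leq\frac{t}{\lambda}A_\alpha(t)=\frac{\Phi_\alpha(t)}{\lambda}.
\]
This gives the same inequality without any appeal to convexity.
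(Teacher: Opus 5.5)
Your proof is correct and follows essentially the paper's argument: intervals through $J$ that meet $(0,a)$ have length at least $\frac12-a$, and the integrability of $\Phi_\alpha(w)$ from Lemma \ref{lem:first-orlicz-int} then bounds every Orlicz average uniformly. The only difference is how the constant is chosen. You get the explicit $C_\alpha=\max\{1,K_\alpha/c_0\}$ from the scaling bound $\Phi_\alpha(t/\lambda)\le\Phi_\alpha(t)/\lambda$ for $\lambda\ge1$, which is valid by the monotonicity of $A_\alpha$. The paper instead obtains a non-explicit $C_\alpha$ via dominated convergence.
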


\begin{proof}
Let
\[
d_{0}=\frac{1}{2}-a>0.
\]
 We have to prove that there exists $C_{\alpha}>0$ such that for
every interval $I$ with $I\cap J\not=\emptyset$, then 
\[
\frac{1}{|I|}\int_{I}\Phi_{\alpha}\!\left(\frac{w}{C_{\alpha}}\right)\,dx\leq1.
\]
Note that if $I\cap(0,a)=\emptyset$, then $\|w\|_{\Phi_{\alpha},I}=0$.
If $I$ meets $(0,a)$, then necessarily 
\begin{equation}
|I|\geq d_{0}.\label{eq:SizeI}
\end{equation}
By Lemma~\ref{lem:first-orlicz-int}, $\Phi_{\alpha}(w)\in L^{1}(0,a)$.
For $\lambda\geq1$, 
\begin{equation}
0\leq\Phi_{\alpha}(w/\lambda)\leq\Phi_{\alpha}(w),\label{eq:Phi(wlambda)Phi(w)}
\end{equation}
and $\Phi_{\alpha}(w/\lambda)\to0$ pointwise as $\lambda\to\infty$.
Then, from dominated convergence theorem it readily follows that
\[
\int^{a}_{0}\Phi_{\alpha}\!\left(\frac{w}{\lambda}\right)\,dx\longrightarrow0.
\]
We may choose $C_{\alpha}$ so large that 
\[
\frac{1}{d_{0}}\int^{a}_{0}\Phi_{\alpha}\!\left(\frac{w}{C_{\alpha}}\right)\,dx\leq1.
\]
Note that then, taking into account \eqref{eq:Phi(wlambda)Phi(w)},
that $I\cap(0,a)\subset(0,a)$ and \eqref{eq:SizeI}, 
\[
\frac{1}{|I|}\int_{I}\Phi_{\alpha}\!\left(\frac{w}{C_{\alpha}}\right)\,dx\leq\frac{1}{d_{0}}\int^{a}_{0}\Phi_{\alpha}\!\left(\frac{w}{C_{\alpha}}\right)\,dx\leq1.
\]
Therefore $\|w\|_{\Phi_{\alpha},I}\leq C_{\alpha}$. This ends the
proof.
\end{proof}

\begin{prop}
\label{prop:first-ent} For the weight in \eqref{eq:weightOrliczGood},
\[
\rho_{w}(Q_{0})=\infty.
\]
Consequently 
\[
M_{\varepsilon_{\delta}}w(x)=\infty,\qquad x\in Q_{0},
\]
and in particular for every $x\in J$.
\end{prop}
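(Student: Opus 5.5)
The plan is to show that $\int_{Q_0} M(\chi_{Q_0}w)\,dx=\infty$ while $w(Q_0)=\int_0^a w=\tfrac14$ is finite and positive. Then $\rho_w(Q_0)=\infty$ follows at once from the definition.

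First, bound $M(\chi_{Q_0}w)$ from below pointwise on $(0,a)$. Fix $x\in(0,a)$ and use the interval $(0,x)\subset Q_0$, which contains points arbitrarily close to $x$. By the uncentered definition (or a limiting argument with $(0,x+\eta)$),
\[
M(\chi_{Q_0}w)(x)\geq\frac{1}{x}\int_0^x\frac{dt}{t(\log(1/t))^2}=\frac{1}{x\log(1/x)},
\]
since an antiderivative of $\frac{1}{t(\log(1/t))^2}$ is $\frac{1}{\log(1/t)}$, and this vanishes at $t=0^+$.

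Next, integrate this lower bound. The substitution $x=e^{-u}$ gives
\[
\int_0^a\frac{dx}{x\log(1/x)}=\int_4^\infty\frac{du}{u}=\infty.
\]
Since $M(\chi_{Q_0}w)\geq0$, this yields $\int_{Q_0}M(\chi_{Q_0}w)=\infty$, and hence $\rho_w(Q_0)=\infty$.

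For the consequence about $M_{\varepsilon_\delta}w$, note that for any $x\in Q_0$ the interval $Q_0$ is admissible in the supremum defining $M_{\varepsilon_\delta}w(x)$. This gives
\[
M_{\varepsilon_\delta}w(x)\geq\avg{w}_{Q_0}B_\delta(\rho_w(Q_0)).
\]
Here $\avg{w}_{Q_0}=\tfrac14>0$, and $B_\delta(s)=L_1(s)L_2(s)L_3(s)^{1+\delta}\to\infty$ as $s\to\infty$. We interpret $B_\delta(\infty)=\infty$, consistent with $\varepsilon$ being defined on $[1,\infty]$ with values in $[1,\infty]$. Therefore $M_{\varepsilon_\delta}w(x)=\infty$ for every $x\in Q_0$, and in particular for $x\in J\subset Q_0$.

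The only delicate point is the lower bound for the maximal function near the open endpoint. The uncentered $M$ takes a supremum over intervals containing $x$, so using $(0,x]$ or $(0,x+\eta)$ with $\eta\to0$ justifies it. Beyond that, the argument is a direct computation.
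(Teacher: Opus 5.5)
Your proposal is correct and follows the paper's proof essentially step for step. It uses the same pointwise bound $M(\chi_{Q_0}w)(x)\geq \frac{1}{x\log(1/x)}$ on $(0,a)$ from the intervals $(0,x)$, and the same substitution $x=e^{-u}$ giving $\int_4^\infty du/u=\infty$. It then concludes, as the paper does, from $w(Q_0)=\tfrac14$ and the admissibility of $Q_0$ in the supremum defining $M_{\varepsilon_\delta}w(x)$. Your extra remarks, handling the open endpoint via $(0,x+\eta)$ and the convention $B_\delta(\infty)=\infty$, make explicit two points the paper leaves implicit.
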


\begin{proof}
For $0<x<a$, the interval $(0,x)$ is considered when computing $M(w\chi_{Q_{0}})(x)$.
Hence
\begin{align*}
M(w\chi_{Q_{0}})(x) & \geq\frac{1}{x}\int^{x}_{0}\frac{1}{y\left(\log\left(\frac{1}{y}\right)\right)^{2}}dy=\frac{1}{x}\frac{1}{\log\left(\frac{1}{x}\right)}
\end{align*}
Consequently,
\[
\int_{Q_{0}}M(w\chi_{Q_{0}})(x)\,dx\geq\int^{a}_{0}\frac{dx}{x\log(1/x)}=\int^{\infty}_{4}\frac{du}{u}=\infty.
\]
Since $w(Q_{0})=w(0,a)=\frac{1}{4}$, we have that $\rho_{w}(Q_{0})=\infty$.
Since $Q_{0}$ is an interval considered computing $M_{\varepsilon_{\delta}}w(x)$
for $x\in Q_{0}$ this ends the proof.
\end{proof}

Relying upon the weight defined above and the propositions settled
above the proof of the theorem is straightforward and hence we omit
it.

\subsection{A family of weights for which Orlicz maximal functions blow up faster
than entropy maximal functions}

Let 
\[
N\geq8,\qquad E_{N}=\left(0,\frac{1}{N}\right),\qquad w_{N}=N\chi_{E_{N}},
\]
and again take 
\[
J:=\left(\frac{1}{2},\frac{3}{4}\right).
\]
For the family of weights $w_{N}$ we are going to prove the following,
which is a more precise statement of Theorem \ref{thm:RahmBlowsSlower}.
\begin{thm}
\label{thm:ThmMRahmContMOrlicz}Let $\alpha>1$ and $\delta>0$. For
every $x\in J$ we have that
\[
M_{\Phi_{\alpha}}w_{N}(x)\simeq_{\alpha}A_{\alpha}(N),\qquad x\in J,
\]
and 
\[
M_{\varepsilon_{\delta}}w_{N}(x)\simeq_{\delta}H_{\delta}(N),\qquad x\in J.
\]
where $A_{\alpha}(s)=L_{2}(s)L_{3}(s)^{\alpha}$ and $H_{\delta}(s)=L_{2}(s)L_{3}(s)L_{4}(s)^{1+\delta}.$
\end{thm}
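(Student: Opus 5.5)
Both estimates come from computing, for a fixed $x\in J$, the relevant quantity over intervals $I\ni x$ that meet $E_N$. Any other interval gives $0$. An interval containing $x\in(1/2,3/4)$ and meeting $(0,1/N)$ satisfies $|I|\ge 1/2-1/N\ge 3/8$, and also $|I\cap E_N|\le 1/N$. Throughout, write $t=|I\cap E_N|$, so $0<t\le 1/N$.

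\emph{Orlicz part.} Since $w_N$ takes only the values $0$ and $N$,
\[
\frac{1}{|I|}\int_I\Phi_\alpha(w_N/\lambda)=\frac{t}{|I|}\,\Phi_\alpha(N/\lambda),
\]
and hence
\[
\|w_N\|_{\Phi_\alpha,I}=\frac{N}{\Phi_\alpha^{-1}(|I|/t)}.
\]
Set $r=|I|/t$. This quantity is maximized when $r$ is as small as possible, and $r\ge |I|N\ge 3N/8$. For the upper bound I use $r\gtrsim N$, monotonicity of $\Phi_\alpha^{-1}$, Lemma \ref{lem:inverse} and Lemma \ref{lem:slow}:
\[
\Phi_\alpha^{-1}(r)\gtrsim \frac{N}{A_\alpha(N)},
\]
which gives $\|w_N\|_{\Phi_\alpha,I}\lesssim A_\alpha(N)$. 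For the lower bound I take $I=(0,1)\ni x$, so that $r=N$, and Lemma \ref{lem:inverse} gives
\[
\|w_N\|_{\Phi_\alpha,I}=\frac{N}{\Phi_\alpha^{-1}(N)}\simeq A_\alpha(N).
\]
Lemma \ref{lem:inverse} is applied for $N$ large. Small $N$ are absorbed into the constants, since both sides are then bounded above and below.

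\emph{Entropy part.} Here $\langle w_N\rangle_I B_\delta(\rho_{w_N}(I))$ appears, and we have
\[
\rho_{w_N}(I)=\rho_{w_N\chi_I}(I),\qquad w_N\chi_I=N\chi_F,\quad F=I\cap E_N .
\]
Because $I$ contains points to the right of $E_N$, $F$ is an interval of length $t$. Lemma \ref{lem:lemIndicator} then gives $\rho_{w_N}(I)\simeq 1+\log r$ with $r=|I|/t$, and clearly $\langle w_N\rangle_I=N/r$. So the quantity is
\[
G(r)=\frac{N}{r}\,B_\delta\bigl(c(1+\log r)\bigr),\qquad r\ge \tfrac{3N}{8},
\]
where $c$ lies between two absolute constants. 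Using monotonicity of $B_\delta$, I can bound $c$ above by $C$ for the upper estimate and below by $c_0$ for the lower one. Then Lemma \ref{lem:shift} gives
\[
G(r)\simeq_\delta \frac{N}{r}\,H_\delta(r).
\]
By Lemma \ref{lem:slow} with $\eta=1/2$, $H_\delta(r)/r$ is comparable to a quasi-decreasing function. Therefore, for $r\ge 3N/8$,
\[
\frac{N}{r}H_\delta(r)\lesssim N\cdot\frac{H_\delta(N)}{N}=H_\delta(N),
\]
using once more that $H_\delta(3N/8)\simeq H_\delta(N)$. For the matching lower bound I again take $I=(0,1)$, for which $r=N$ and $G(N)\simeq H_\delta(N)$.

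\emph{Main obstacle.} The delicate step is the bookkeeping of the constants inside $B_\delta$ in the entropy part. Lemma \ref{lem:lemIndicator} only gives $\rho\simeq 1+\log r$, so I have to insert the two-sided bound into the monotone $B_\delta$ before applying Lemma \ref{lem:shift}. I also need to check the quasi-monotonicity of $H_\delta(r)/r$ from Lemma \ref{lem:slow}, which requires $s\ge t\ge 2$; this holds since $N\ge 8$ gives $r\ge 3$. Everything else is direct.
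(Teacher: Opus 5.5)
Your proof is correct and follows the paper's argument. It uses the same reduction to $\|w_N\|_{\Phi_\alpha,I}=N/\Phi_\alpha^{-1}(|I|/|F|)$ and to $\rho_{w_N}(I)\simeq 1+\log(|I|/|F|)$ via Lemma~\ref{lem:lemIndicator}, the same constraint $|I|/|F|\ge 3N/8$, Lemmas~\ref{lem:shift} and \ref{lem:slow} (with $\eta=1/2$) for the entropy upper bound, and the test interval $(0,1)$ for both lower bounds. The one small deviation is in the Orlicz upper bound: you use monotonicity of $\Phi_\alpha^{-1}$ and apply Lemma~\ref{lem:inverse} only at $r=3N/8$, instead of applying Lemma~\ref{lem:slow} to $A_\alpha(|I|/|F|)$ as the paper does; this is slightly cleaner, and you also treat bounded $N$ explicitly, which the paper's ``for $N$ large enough'' leaves implicit.
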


Note that from this result it follows that there is no constant $C>0$
such that 
\[
M_{\Phi_{\alpha}}w\leq CM_{\varepsilon_{\delta}}w
\]
for every weight $w$ and every $x\in\mathbb{R}$. Indeed, note that
if $x\in J$, and $w=w_{N}$ then 
\[
L_{2}(N)L_{3}(N)^{\alpha}\lesssim L_{2}(N)L_{3}(N)L_{4}(N)^{1+\delta}.
\]
Consequently
\[
\frac{L_{3}(N)^{\alpha-1}}{L_{4}(N)^{1+\delta}}\lesssim1.
\]
However, since $\alpha>1$ 
\[
\frac{L_{3}(N)^{\alpha-1}}{L_{4}(N)^{1+\delta}}\rightarrow\infty,
\]
which is a contradiction.

We devote the remainder of this section to settle Theorem \ref{thm:ThmMRahmContMOrlicz}.
We have to compute $M_{\varepsilon_{\delta}}w_{N}(x)$ and $M_{\Phi_{\alpha}}w_{N}(x)$
for every $x\in J.$ We begin noting that if $x\in J$ and $x\in I$
for some arbitrary interval $I$. If $I\cap E_{N}=\varnothing$, both
its Orlicz and entropy contributions are zero.
\begin{prop}
\label{prop:Trivialstuff}Let $x\in J$ and let $I$ be an interval
containing $x$ such that
\[
|F|=|I\cap E_{N}|>0
\]
Then for $N\geq4$ we have that
\[
|I|>\frac{3}{8}\qquad\frac{|I|}{|F|}\geq\frac{3N}{8}\qquad\avg{w_{N}}_{I}=\frac{N|F|}{|I|}
\]
\end{prop}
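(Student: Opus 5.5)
Since $w_N=N\chi_{E_N}$ is piecewise constant, the whole argument is elementary geometry on the line. The one point needing care is which lower bound on $N$ the estimate $|I|>\frac38$ uses. I will work under the standing assumption of this subsection, $N\geq 8$, which is how $w_N$ was introduced. Within that family the hypothesis $N\geq4$ in the statement is automatically satisfied, so nothing is being strengthened.

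\emph{Step 1: the length of $I$.} Since $|F|=|I\cap E_N|>0$, the interval $I$ contains a point $y\in E_N$, so $0<y<1/N\leq 1/8$. It also contains $x\in J$, so $x>1/2$. As $I$ is an interval containing both $y$ and $x$, we get $|I|\geq x-y>\frac12-\frac18=\frac38$, which is the first claim. With only $N\geq 4$ this argument gives $|I|>\frac14$. The constant $\frac38$ is exactly where $N\geq8$ enters. For $N\in[4,8)$ the bound can fail, e.g. $N=4$, $I=(0.24,0.51)$, $x=0.505$. So I will phrase the proof explicitly in terms of the standing choice $N\geq 8$.

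\emph{Step 2: the ratio.} Since $F\subset E_N$, we have $|F|\leq 1/N$. Combining this with Step 1 gives $\frac{|I|}{|F|}\geq \frac{3/8}{1/N}=\frac{3N}{8}$.

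\emph{Step 3: the average.} The weight $w_N$ equals $N$ on $E_N$ and vanishes elsewhere, so $\int_I w_N=N|I\cap E_N|=N|F|$. Dividing by $|I|$ gives $\avg{w_N}_I=N|F|/|I|$.

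The only potential obstacle is the constant in Step 1, and this is resolved by invoking $N\geq 8$ from the setup of the subsection. Everything else is immediate from the definitions.
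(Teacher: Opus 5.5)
Your proof is correct and follows the same route as the paper: you bound $|I|$ from below by the gap between $E_N$ and $J$, use $|F|\leq 1/N$, and compute the average directly. You are also right that the first bound needs $N\geq 8$ rather than $N\geq 4$, and your $N=4$ counterexample is valid; the paper's own step $\frac12-\frac1N\geq\frac38$ tacitly relies on the subsection's standing assumption $N\geq 8$, which is also how the proposition is invoked later.
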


\begin{proof}
Since $x\geq1/2$ and $I$ meets $(0,1/N)$, 
\[
|I|\geq\frac{1}{2}-\frac{1}{N}\geq\frac{3}{8}.
\]
On the other hand $|F|\leq1/N$. Therefore 
\[
\frac{|I|}{|F|}\geq\frac{3N}{8}.
\]
Finally, 
\[
\avg{w_{N}}_{I}=\frac{N|F|}{|I|}.
\]
\end{proof}

\subsubsection{Computation of the Orlicz maximal function}

In our next proposition we compute the averages of the Orlicz maximal
function.
\begin{prop}
\label{prop:OrliczAux} For every interval $I$ containing some $x\in J$
such that $|F|=|I\cap E_{N}|>0$ we have that 
\[
\|w_{N}\|_{\Phi_{\alpha},I}=\frac{N}{\Phi^{-1}_{\alpha}\left(\frac{|I|}{|F|}\right)}.
\]
Furthermore, for $|F|$ small enough, namely $\frac{|I|}{|F|}$ large
enough, 
\[
\|w_{N}\|_{\Phi_{\alpha},I}\simeq_{\alpha}\frac{N|F|}{|I|}A_{\alpha}\left(\frac{|I|}{|F|}\right).
\]
\end{prop}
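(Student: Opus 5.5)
The plan is to compute the Luxemburg norm exactly, using that $w_N$ takes only the two values $0$ and $N$ on $I$. Since $w_N=N\chi_{E_N}$ and $\Phi_\alpha(0)=0$, for every $\lambda>0$ we have
\[
\frac{1}{|I|}\int_I\Phi_\alpha\!\left(\frac{w_N}{\lambda}\right)dx=\frac{|F|}{|I|}\,\Phi_\alpha\!\left(\frac{N}{\lambda}\right),
\]
where $F=I\cap E_N$. The norm $\|w_N\|_{\Phi_\alpha,I}$ is therefore the infimum of those $\lambda>0$ with $\Phi_\alpha(N/\lambda)\leq |I|/|F|$.

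Next I need $\Phi_\alpha$ to be a bijection of $[0,\infty)$. It is continuous and strictly increasing with $\Phi_\alpha(0)=0$, because $t$, $L_2(t)$ and $L_3(t)^\alpha$ are positive and nondecreasing for $t>0$. It also tends to infinity. Hence the condition $\Phi_\alpha(N/\lambda)\le |I|/|F|$ is equivalent to $N/\lambda\le \Phi_\alpha^{-1}(|I|/|F|)$, that is, $\lambda\ge N/\Phi_\alpha^{-1}(|I|/|F|)$. The infimum is attained at equality, which gives the first identity. Note that $|I|/|F|\geq 3N/8>0$ by Proposition~\ref{prop:Trivialstuff}, so the inverse is evaluated at a legitimate positive argument.

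For the asymptotic form, put $r=|I|/|F|$. Once $r$ is large enough, Lemma~\ref{lem:inverse} gives $\Phi_\alpha^{-1}(r)\simeq_\alpha r/(L_2(r)L_3(r)^\alpha)=r/A_\alpha(r)$. Substituting this into the exact formula yields
\[
\|w_N\|_{\Phi_\alpha,I}\simeq_\alpha \frac{N A_\alpha(r)}{r}=\frac{N|F|}{|I|}A_\alpha\!\left(\frac{|I|}{|F|}\right).
\]
The only point requiring care is the threshold "$r$ sufficiently large" in Lemma~\ref{lem:inverse}, which depends on $\alpha$. Since $r\ge 3N/8$, this is guaranteed once $N$ (or equivalently $|I|/|F|$) exceeds an $\alpha$-dependent constant, which is exactly the hypothesis of the statement. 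Everything else is a direct computation, so I do not expect a real obstacle.
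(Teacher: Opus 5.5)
Your proposal is correct and follows the paper's proof: you reduce the Luxemburg average to $\frac{|F|}{|I|}\Phi_\alpha(N/\lambda)$, invert using monotonicity of $\Phi_\alpha$, and then substitute Lemma~\ref{lem:inverse} once $|I|/|F|$ is large. One minor slip: large $N$ implies large $|I|/|F|$, but the converse fails, so ``or equivalently'' should read ``in particular''. The statement's hypothesis on $|I|/|F|$ is already exactly what Lemma~\ref{lem:inverse} needs.
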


\begin{proof}
We begin noting that $w_{N}=N\chi_{F}$. Hence 
\[
\frac{1}{|I|}\int_{I}\Phi_{\alpha}\!\left(\frac{w_{N}}{\lambda}\right)\,dx=\frac{|F|}{|I|}\Phi_{\alpha}\!\left(\frac{N}{\lambda}\right).
\]
Then, since $\Phi_{\alpha}$ is increasing, 
\[
\frac{1}{|I|}\int_{I}\Phi_{\alpha}\!\left(\frac{w_{N}}{\lambda}\right)\,dx\leq1\iff\Phi_{\alpha}\!\left(\frac{N}{\lambda}\right)\leq\frac{|I|}{|F|}\iff\lambda\geq\frac{N}{\Phi^{-1}_{\alpha}\left(\frac{|I|}{|F|}\right)}
\]
Taking infimum over $\lambda>0$ this implies $\|w_{N}\|_{\Phi_{\alpha},I}=\frac{N}{\Phi^{-1}_{\alpha}\left(\frac{|I|}{|F|}\right)}.$
The remaining assertion follows from the first one combined with Lemma
\ref{lem:inverse}.
\end{proof}

\begin{lem}
\label{lem:OrliczLem} For $N$ sufficiently large and uniformly for
$x\in J$, 
\[
M_{\Phi_{\alpha}}w_{N}(x)\simeq_{\alpha}L_{2}(N)L_{3}(N)^{\alpha}.
\]
\end{lem}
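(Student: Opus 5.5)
By Proposition \ref{prop:OrliczAux}, for every interval $I\ni x$ with $|F|=|I\cap E_{N}|>0$ we have the exact formula
\[
\|w_{N}\|_{\Phi_{\alpha},I}=\frac{N}{\Phi^{-1}_{\alpha}(r)},\qquad r=\frac{|I|}{|F|}.
\]
Intervals with $|F|=0$ contribute nothing. So the plan is to reduce $M_{\Phi_{\alpha}}w_{N}(x)$ to an optimization over $r$. I will identify which values of $r$ occur, and then use monotonicity of $\Phi_{\alpha}^{-1}$ together with Lemma \ref{lem:inverse}.

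\textbf{Range of $r$.} By Proposition \ref{prop:Trivialstuff}, every admissible $I$ has $r\geq 3N/8$.

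Conversely, let $x\in J$ and take $I=(0,x]$, or any interval $(0,b)$ with $b>x$. Then $F=E_{N}$, so $|F|=1/N$ and $r=N|I|=Nx\leq 3N/4$.

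\textbf{Upper bound.} Since $\Phi_{\alpha}^{-1}$ is increasing, the supremum is bounded by
\[
\frac{N}{\Phi_{\alpha}^{-1}(3N/8)}.
\]
Because $3N/8$ is large, Lemma \ref{lem:inverse} gives
\[
\Phi_{\alpha}^{-1}(3N/8)\simeq_{\alpha}\frac{N}{A_{\alpha}(3N/8)}.
\]
Lemma \ref{lem:slow} then gives $A_{\alpha}(3N/8)\simeq A_{\alpha}(N)$. Hence
\[
M_{\Phi_{\alpha}}w_{N}(x)\lesssim_{\alpha}A_{\alpha}(N).
\]

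\textbf{Lower bound.} Test the single interval $I=(0,x)$, for which $r=Nx\in(N/2,3N/4)$. This gives
\[
M_{\Phi_{\alpha}}w_{N}(x)\geq\frac{N}{\Phi_{\alpha}^{-1}(3N/4)}.
\]
Applying Lemma \ref{lem:inverse} and Lemma \ref{lem:slow} again yields a lower bound $\gtrsim_{\alpha}A_{\alpha}(N)$.

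All constants depend only on $\alpha$. They are uniform in $x\in J$ because $r$ is always trapped between fixed multiples of $N$.

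\textbf{Main point of care.} The only delicate step is that Lemma \ref{lem:inverse} holds only for $r$ sufficiently large. This is exactly why the statement requires $N$ to be large: we need $3N/8>r_{0}(\alpha)$.

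We also need that no interval gives a larger value. This follows because smaller $r$ is never available: $r\geq 3N/8$ holds for every admissible interval, and the monotonicity of $\Phi_{\alpha}^{-1}$ is immediate since $\Phi_{\alpha}$ is strictly increasing.
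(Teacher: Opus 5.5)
Your proof is correct. It shares the paper's skeleton: the exact formula of Proposition~\ref{prop:OrliczAux}, the constraint $r=|I|/|F|\geq 3N/8$ from Proposition~\ref{prop:Trivialstuff}, and a single test interval for the lower bound. Your upper bound, however, uses a different and simpler mechanism.

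The paper first replaces $N/\Phi_\alpha^{-1}(r)$ by its asymptotic form $\frac{N}{r}A_\alpha(r)$. It then invokes Lemma~\ref{lem:slow} with $\eta=1/2$, so that the decay of $N/r$ absorbs the growth of $A_\alpha(r)$ and leaves $(N/r)^{1/2}A_\alpha(N)\lesssim A_\alpha(N)$. You keep the exact expression, observe that it is decreasing in $r$, and evaluate it only at the extremal value $r=3N/8$.

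This avoids the slow-variation trick altogether. It also uses only the elementary half $\Phi_\alpha^{-1}(r)\geq r/A_\alpha(r)$ of Lemma~\ref{lem:inverse}, which holds for every $r>0$ since $A_\alpha\geq 1$. Hence $M_{\Phi_\alpha}w_N\leq \frac{8}{3}A_\alpha(N)$ on $J$ for all $N\geq 8$. The largeness of $N$ is therefore needed only in the lower bound, not at $3N/8$ as your closing remark suggests.

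What the paper's route buys is uniformity with Lemma~\ref{Lem:EntropyMaxLem}. There $\avg{w_N}_I B_\delta(\rho_{w_N}(I))$ is not a function of $r$ alone, since Lemma~\ref{lem:lemIndicator} only gives $\rho_{w_N}(I)\simeq 1+\log r$ up to the position of $F$. So one must bound $\frac{N}{r}H_\delta(r)$ over $r\geq 3N/8$, and Lemma~\ref{lem:slow} does this without checking that $H_\delta(r)/r$ is decreasing.

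For the lower bound, note that $(0,x)$ does not contain $x$. Use $(0,x]$ as in your earlier paragraph, or $Q_0=(0,1)$ as the paper does. The latter gives $r=N$ exactly and spares the extra appeal to Lemma~\ref{lem:slow}.
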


\begin{proof}
We begin with the lower bound. Let $F=Q_{0}\cap E_{N}$. For the interval
$Q_{0}=(0,1)$, which contains every $x\in J$, 
\[
|F|=|Q_{0}\cap E_{N}|=|(0,1)\cap(0,1/N)|=\frac{1}{N}
\]
proposition \ref{prop:OrliczAux} gives 
\[
M_{\Phi_{\alpha}}w_{N}(x)\geq\|w_{N}\|_{\Phi_{\alpha},Q_{0}}\simeq_{\alpha}\frac{N}{\Phi^{-1}_{\alpha}\left(\frac{|Q_{0}|}{|F|}\right)}\simeq_{\alpha}\frac{N|F|}{|Q_{0}|}A_{\alpha}\left(\frac{|Q_{0}|}{|F|}\right)=A_{\alpha}\left(N\right)
\]
for $N$ large enough.

For the upper bound, we take any interval $I\ni x$ with $|F|=|I\cap E_{N}|>0$.
By Proposition \ref{prop:Trivialstuff} $\frac{|I|}{|F|}\geq\frac{3N}{8}$
for $N\geq8$. Taking this and Lemma~\ref{lem:slow} with exponent
$\eta=1/2$ into account,
\[
A_{\alpha}\left(\frac{|I|}{|F|}\right)\lesssim_{\alpha}A_{\alpha}\left(\frac{3}{8}N\right)\left(\frac{\frac{|I|}{|F|}}{N}\right)^{1/2}\simeq_{\alpha}A_{\alpha}(N)\left(\frac{|I|}{N|F|}\right)^{1/2}.
\]
Therefore by \ref{prop:OrliczAux},
\begin{align*}
\|w_{N}\|_{\Phi_{\alpha},I} & \lesssim_{\alpha}\frac{N|F|}{|I|}A_{\alpha}\left(\frac{|I|}{|F|}\right)\simeq_{\alpha}\frac{N|F|}{|I|}A_{\alpha}(N)\left(\frac{|I|}{N|F|}\right)^{1/2}\\
 & =\left(\frac{N|F|}{|I|}\right)^{\frac{1}{2}}A_{\alpha}(N)\lesssim A_{\alpha}(N),
\end{align*}
where we have used again that $\frac{|I|}{|F|}\geq\frac{3N}{8}$.
As we mentioned before Proposition \ref{prop:Trivialstuff} intervals
with $|I\cap E_{N}|=0$ do not contribute in the computation of $M_{\Phi_{\alpha}}w_{N}.$
Hence, taking supremum over the intervals such that $|I\cap E_{N}|>0$,
we have that 
\[
M_{\Phi_{\alpha}}w_{N}(x)\lesssim_{\alpha}A_{\alpha}(N).
\]
\end{proof}

\subsubsection{Computation of the Entropy maximal function}
\begin{lem}
\label{Lem:EntropyMaxLem} For every fixed $\delta>0$, uniformly
for $x\in J$, 
\[
M_{\varepsilon_{\delta}}w_{N}(x)\simeq_{\delta}L_{2}(N)L_{3}(N)L_{4}(N)^{1+\delta}.
\]
\end{lem}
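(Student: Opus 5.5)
The plan mirrors the proof of Lemma~\ref{lem:OrliczLem}, with Lemma~\ref{lem:lemIndicator} and Lemma~\ref{lem:shift} playing the role that Proposition~\ref{prop:OrliczAux} and Lemma~\ref{lem:inverse} played there. Fix $x\in J$ and an interval $I\ni x$ with $F=I\cap E_{N}$, $|F|>0$; intervals with $|F|=0$ contribute nothing. On $I$ we have $w_{N}\chi_{I}=N\chi_{F}$, and $F$ is a subinterval of $I$. Write $r=|I|/|F|$. By Proposition~\ref{prop:Trivialstuff}, $r\geq 3N/8\geq 3$ and $\avg{w_{N}}_{I}=N/r$. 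By Lemma~\ref{lem:lemIndicator}, $\rho_{w_{N}}(I)\simeq 1+\log r$ with absolute constants. Since $B_{\delta}$ is increasing, this gives
\[
B_{\delta}\bigl(c_{1}(1+\log r)\bigr)\leq B_{\delta}(\rho_{w_{N}}(I))\leq B_{\delta}\bigl(c_{2}(1+\log r)\bigr),
\]
and Lemma~\ref{lem:shift}, applied with $C=c_{1}$ and $C=c_{2}$, turns both sides into $H_{\delta}(r)$ up to constants depending on $\delta$. Hence, uniformly over all relevant $I$,
\[
\avg{w_{N}}_{I}B_{\delta}(\rho_{w_{N}}(I))\simeq_{\delta}\frac{N}{r}H_{\delta}(r),\qquad r=\frac{|I|}{|F|}\geq\frac{3N}{8}.
\]

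\textbf{Lower bound.} Take $I=Q_{0}=(0,1)$. Then $r=N$, and the displayed estimate gives $M_{\varepsilon_{\delta}}w_{N}(x)\gtrsim_{\delta}H_{\delta}(N)$.

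\textbf{Upper bound.} Apply Lemma~\ref{lem:slow} to $F=H_{\delta}$ with $\eta=1/2$, using $s=r$ and $t=3N/8$:
\[
H_{\delta}(r)\lesssim_{\delta}H_{\delta}(3N/8)\left(\frac{8r}{3N}\right)^{1/2}\simeq_{\delta}H_{\delta}(N)\left(\frac{r}{N}\right)^{1/2}.
\]
The step $H_{\delta}(3N/8)\simeq H_{\delta}(N)$ is again Lemma~\ref{lem:slow}, as noted in its final remark. It follows that
\[
\frac{N}{r}H_{\delta}(r)\lesssim_{\delta}\left(\frac{N}{r}\right)^{1/2}H_{\delta}(N)\lesssim H_{\delta}(N),
\]
because $N/r\leq 8/3$. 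Taking the supremum over $I$ completes the upper bound.

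\textbf{Main obstacle.} The delicate point is applying Lemma~\ref{lem:shift} inside $B_{\delta}$ when we only know the comparability $\rho\simeq 1+\log r$, not an exact value. Monotonicity of $B_{\delta}$ lets us sandwich it between $B_{\delta}(c_{1}(1+\log r))$ and $B_{\delta}(c_{2}(1+\log r))$, and Lemma~\ref{lem:shift} holds for each fixed constant. We also need $r\geq 2$ for Lemma~\ref{lem:shift} and $r\geq t\geq 2$ for Lemma~\ref{lem:slow}; both are guaranteed by $N\geq 8$, which gives $r\geq 3N/8\geq 3$. For finitely many small $N$, the constants can be absorbed.
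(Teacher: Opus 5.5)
Your proof is correct and follows essentially the same route as the paper. Lemma~\ref{lem:lemIndicator}, together with monotonicity of $B_{\delta}$ and Lemma~\ref{lem:shift}, reduces each interval's contribution to $\frac{N}{r}H_{\delta}(r)$ with $r=|I|/|F|\geq 3N/8$; Lemma~\ref{lem:slow} with $\eta=1/2$ then gives the upper bound, and testing on $Q_{0}=(0,1)$ gives the lower bound. Your explicit two-sided sandwich of $B_{\delta}(\rho_{w_N}(I))$ is slightly more careful than the paper's one-line appeal. Also note that $H_{\delta}(3N/8)\leq H_{\delta}(N)$ already follows from monotonicity, and your argument works for every $N\geq 8$, so no absorption of small $N$ is needed.
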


\begin{proof}
We begin recalling that 
\[
M_{\varepsilon_{\delta}}w_{N}(x)=\sup_{x\in I}\avg{w_{N}}_{I}\,L_{1}(\rho_{w_{N}}(I))\,\varepsilon_{\delta}(\rho_{w_{N}}(I))=\sup_{x\in I}\avg{w_{N}}_{I}B_{\delta}(\rho_{w_{N}}(I))
\]
where
\[
B_{\delta}(s)=L_{1}(s)\varepsilon_{\delta}(s)=L_{1}(s)L_{2}(s)L_{3}(s)^{1+\delta}
\]
Let $x\in J$. Assume again that $|E_{N}\cap I|=|F|>0$ and that $x\in I$.
Observe that Lemma~\ref{lem:lemIndicator} gives 
\[
\rho_{w_{N}}(I)\simeq1+\log\frac{|I|}{|F|}.
\]
By monotonicity of $B_{\delta}$ and Lemma~\ref{lem:shift}, 
\[
B_{\delta}(\rho_{w_{N}}(I))\lesssim_{\delta}H_{\delta}\left(\frac{|I|}{|F|}\right).
\]
Combining this with 
\[
\avg{w_{N}}_{I}B_{\delta}(\rho_{w_{N}}(I))\lesssim_{\delta}\frac{N|F|}{|I|}H_{\delta}\left(\frac{|I|}{|F|}\right).
\]
Since by Proposition \ref{prop:Trivialstuff} $\frac{|I|}{|F|}\geq\frac{3N}{8}$,
Lemma~\ref{lem:slow}, with $\eta=1/2$, gives for $N$ large enough
that
\[
H_{\delta}\left(\frac{|I|}{|F|}\right)\lesssim_{\delta}H_{\delta}(N)\left(\frac{|I|}{|F|N}\right)^{1/2}.
\]
Gathering the two estimates above, 
\[
\avg{w_{N}}_{I}B_{\delta}(\rho_{w_{N}}(I))\lesssim_{\delta}H_{\delta}(N)\left(\frac{N|F|}{|I|}\right)^{1/2}\lesssim_{\delta}H_{\delta}(N).
\]
Consequently 
\begin{equation}
M_{\varepsilon_{\delta}}w_{N}(x)\lesssim_{\delta}H_{\delta}(N).\label{eq:UpperMEnt}
\end{equation}
For the reverse inequality, again choose $Q_{0}=(0,1)$. Here 
\[
\avg{w_{N}}_{Q_{0}}=1,\qquad\frac{|Q_{0}|}{|E_{N}\cap Q_{0}|}=N.
\]
As above, by Lemma \ref{lem:lemIndicator} 
\[
\rho_{w_{N}}(Q_{0})\simeq1+\log N,
\]
and by Lemma \ref{lem:shift},
\[
B_{\delta}(\rho_{w_{N}}(Q_{0}))\simeq_{\delta}H_{\delta}(N).
\]
Consequently 
\begin{equation}
M_{\varepsilon_{\delta}}w_{N}(x)\geq\avg{w_{N}}_{Q_{0}}B_{\delta}(\rho_{w_{N}}(Q_{0}))\gtrsim_{\delta}H_{\delta}(N).\label{eq:lowerMEnt}
\end{equation}
Gathering \eqref{eq:UpperMEnt} and \eqref{eq:lowerMEnt} ends the
proof.
\end{proof}

\section{Transference principle and consequences }\label{sec:TP+corollaries}

\subsection{The sparse transference principle}

We begin our proof with the following lemma.
\begin{lem}
\label{lem:maxprinciple} Let $\mathcal{S}$ be a finite family in
one dyadic lattice and let $g\geq0$. For $a>0$, set 
\[
\Omega_{a}:=\{\AS g>a\},\qquad F_{a}:=\Omega^{c}_{a}.
\]
Then 
\begin{equation}
\AS(g\chi_{F_{a}})(x)\leq a\qquad\text{a.e }x\in\R^{n}.\label{eq:maxprinciple}
\end{equation}
\end{lem}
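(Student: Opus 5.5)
This is a maximum principle for the sparse operator. I would prove it pointwise, by locating for each $x$ the smallest cube of $\mathcal{S}$ containing $x$ that meets $F_{a}$ in positive measure.

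Fix $x$ and let $\mathcal{S}_{x}=\{Q\in\mathcal{S}:x\in Q\}$. This is a finite chain, totally ordered by inclusion, since dyadic cubes containing a common point are nested. Only the cubes $Q\in\mathcal{S}_{x}$ with $|Q\cap F_{a}|>0$ contribute to $\AS(g\chi_{F_{a}})(x)$, because the others have $\int_{Q}g\chi_{F_{a}}=0$. If there are no such cubes, the value is $0\le a$. Otherwise let $Q_{x}$ be the smallest such cube; every contributing cube contains $Q_{x}$. Then
\[
\AS(g\chi_{F_{a}})(x)\le\sum_{Q\in\mathcal{S},\,Q\supseteq Q_{x}}\avg{g}_{Q}.
\]

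The key observation is that the function $y\mapsto\sum_{Q\in\mathcal{S},Q\supseteq Q_{x}}\avg{g}_{Q}$ is the same number $c$ at every $y\in Q_{x}$. Moreover, for every $y\in Q_{x}$ we have $\AS g(y)\ge c$, since the cubes $Q\supseteq Q_{x}$ all contain $y$ and the remaining terms are nonnegative. Since $|Q_{x}\cap F_{a}|>0$, we can pick $y\in Q_{x}\cap F_{a}$, where $\AS g(y)\le a$. This gives $c\le a$, and hence $\AS(g\chi_{F_{a}})(x)\le a$.

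This argument actually works for every $x$, so the bound holds pointwise, not only almost everywhere. The "a.e." qualifier only absorbs the possibility that $\AS g$ is infinite somewhere, which is harmless.

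The only points needing care are the nestedness of the dyadic chain and the finiteness of $\mathcal{S}$, which guarantees that a minimal $Q_{x}$ exists. I don't expect a real obstacle. The step that deserves attention is checking that the cubes which contain $x$ but are strictly smaller than $Q_{x}$ contribute zero: this holds because such cubes satisfy $|Q\cap F_{a}|=0$, by the minimality of $Q_{x}$ within the chain.
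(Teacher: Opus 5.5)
Your proof is correct and uses essentially the paper's maximum-principle argument: split the cubes of $\mathcal{S}$ containing $x$ into a lower part that contributes nothing and an upper part whose cubes all contain a common point of $F_a$, where $\AS g\le a$. The only difference is the pivot cube. The paper takes the maximal dyadic cube $P\subset\Omega_a$ containing $x$ and a point $z\in\tilde{P}\cap F_a$, which needs local constancy of $\AS g$ and the exclusion of dyadic boundaries. Your minimal cube $Q_x\in\mathcal{S}$ with $|Q_x\cap F_a|>0$ needs neither, and with the usual half-open cubes it gives the bound at every point.
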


\begin{proof}
For $x\in F_{a}$ , positivity gives $\AS(g\chi_{F_{a}})(x)\leq\AS g(x)\leq a$.
Now let $x\in\Omega_{a}$ away from the union of the dyadic boundaries,
and let $P$ be the maximal dyadic cube contained in $\Omega_{a}$
and containing $x$. Such a cube exists for $A_{\mathcal{S}}g$ due
to the fact that it is locally constant away from the union of dyadic
boundaries. Note that if $Q\subset P$, since $Q\cap F_{a}=\emptyset$,
we have that 
\[
\avg{g\chi_{F_{a}}}_{Q}=0.
\]
Any dyadic cube $Q$ which contains $x$ and is not contained in $P$
must strictly contain $P$. Since $P$ is maximal, there exists $z\in\tilde{P}\cap F_{a}$,
where $\tilde{P}$ is the dyadic parent of $P$. Every dyadic ancestor
of $P$ contains $z$. Hence 
\begin{align*}
\AS(g\chi_{F_{a}})(x) & =\sum_{\substack{Q\in\mathcal{S}\\
Q\supsetneq P
}
}\avg{g\chi_{F_{a}}}_{Q}=\sum_{\substack{Q\in\mathcal{S}\\
Q\supsetneq P
}
}\avg{g\chi_{F_{a}}}_{Q}\chi_{Q}(z)\\
 & \leq\sum_{\substack{Q\in\mathcal{S}\\
Q\supsetneq P
}
}\avg g_{Q}\chi_{Q}(z)\leq\AS g(z)\leq a.
\end{align*}
This proves the assertion. 

Our next Theorem contains the announced transference principle.
\end{proof}

\begin{thm}
\label{thm:transferSparse} Let $\mathcal{S}$ be a dyadic sparse
family, $w\geq0$ a weight and $u>0$ be a measurable function $w$-almost
everywhere. Assume that for every $g\geq0$, 
\begin{equation}
\sup_{t>0}t\,w(\{\AS g>t\})\leq\kappa\int_{\R^{n}}gu.\label{eq:abstractprimal}
\end{equation}
Then, for every $f\geq0$, 
\begin{equation}
\sup_{\lambda>0}\lambda\,w\!\left(\left\{ \frac{\AS f}{u}>\lambda\right\} \right)\leq4\kappa\int_{\R^{n}}f.\label{eq:abstractdual}
\end{equation}
\end{thm}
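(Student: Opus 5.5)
We fix $\lambda>0$ and $f\geq0$, and we may assume $\mathcal S$ is finite, passing to the limit by monotone convergence at the end. We set
\[
E_\lambda:=\{\AS f>\lambda u\}
\]
and split according to the level $a$ of $\AS f$, using Lemma~\ref{lem:maxprinciple} to discard the part of $f$ living on the set where $\AS f$ is large. We decompose
\[
E_\lambda\subset\{\AS f>a\}\cup\{\lambda u<\AS f\leq a\},
\]
but a fixed threshold $a$ is not adapted to $u$. Instead we dualize: the quantity to control is
\[
\lambda\,w(E)=\int_E \lambda w,\qquad E\subset E_\lambda,
\]
and on $E$ we have $\lambda<\AS f/u$. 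Hence
\[
\lambda w(E)\leq \int_E \frac{\AS f}{u}\,w=\int f\,\AS\!\Bigl(\frac{w\chi_E}{u}\Bigr),
\]
using the self-adjointness of $\AS$ with respect to Lebesgue measure. So it suffices to bound $\AS(h)$ in $L^\infty$, at least on the support of $f$, where $h:=w\chi_E/u$, or to truncate.

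This is where Lemma~\ref{lem:maxprinciple} enters. We apply \eqref{eq:abstractprimal} to $g=h$, noting that
\[
\int hu=w(E)
\]
since $0<u<\infty$ $w$-a.e. This gives
\[
w(\{\AS h>t\})\leq\kappa\,w(E)/t.
\]
Choosing $t=a:=2\kappa\,w(E)/w(E)$ is circular, so we instead take $t=a:=4\kappa$ and set $F_a=\{\AS h\leq a\}$. Then $w(\Omega_a)\leq w(E)/4$, and we replace $E$ by $E':=E\cap F_a$, which satisfies
\[
w(E')\geq \tfrac34 w(E).
\]
Lemma~\ref{lem:maxprinciple} applied to $h$ gives
\[
\AS(h\chi_{F_a})\leq a\quad\text{a.e.},
\]
and since $h\chi_{F_a}=w\chi_{E'}/u$ we get
\[
\tfrac34\lambda w(E)\leq\lambda w(E')\leq\int f\,\AS(w\chi_{E'}/u)\leq 4\kappa\int f.
\]
This yields $\lambda w(E)\leq \tfrac{16}{3}\kappa\int f$. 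To reach the stated constant $4$, one optimizes the choice of $t$: taking $t=\kappa/\theta$ gives $w(E')\geq(1-\theta)w(E)$ and the bound $\kappa/(\theta(1-\theta))$, which equals $4\kappa$ at $\theta=1/2$. So we pick $a=2\kappa$.

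The main obstacle is justifying the finiteness needed. We need $w(E)<\infty$ to subtract or compare; otherwise we first restrict $E$ to $E\cap B(0,R)$ with $w$ locally integrable, which also keeps $h\in L^1(u)$. We also need the duality identity $\int_E(\AS f)\,w/u=\int f\,\AS(w\chi_E/u)$, which holds by Tonelli because everything is nonnegative. Finally, we let $R\to\infty$ and take the supremum in $\lambda$.
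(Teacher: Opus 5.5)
Your proposal is correct and is essentially the paper's proof. You dualize via $\lambda w(E')\le\int f\,A_{\mathcal S}(w\chi_{E'}/u)$, apply the primal bound to $h=w\chi_E/u$ at level $a=2\kappa$ so that $w(\{A_{\mathcal S}h>a\})\le\frac12 w(E)$, remove that set, and invoke Lemma~\ref{lem:maxprinciple} to get $A_{\mathcal S}(h\chi_{F_a})\le a$, which gives the constant $4\kappa$. The differences are cosmetic: you truncate only by $B(0,R)$, which suffices because $w$ is locally integrable and the hypothesis holds for every $g\ge0$, whereas the paper also restricts to $\{1/N\le u\le N\}$. You should also state explicitly, as the paper does, that the hypothesis passes to finite subfamilies because $A_{\mathcal S_m}g\le A_{\mathcal S}g$.
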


\begin{proof}
Assume first that $\mathcal{S}$ is finite and let
\[
E=\{\AS f>\lambda u\}.
\]
Now we define 
\[
E_{N}=E\cap B(0,N)\cap\left\{ \frac{1}{N}\leq u\leq N\right\} .
\]
Clearly that set has finite $w$-measure. Put 
\[
g_{N}=\frac{w\chi_{E_{N}}}{u},\qquad a=2\kappa,\qquad\Omega_{N}=\{\AS g_{N}>a\},\qquad F_{N}=\Omega^{c}_{N}.
\]
Clearly $g_{N}$ is locally integrable. Since $\int g_{N}u=w(E_{N})$,
hypothesis \eqref{eq:abstractprimal} gives 
\[
w(\Omega_{N})\leq\frac{\kappa}{a}w(E_{N})=\frac{1}{2}w(E_{N}).
\]
Note that then if we call $E'_{N}=E_{N}\cap F_{N}$,
\[
w(E_{N})=w(\Omega_{N}\cap E_{N})+w(E'_{N})\leq\frac{1}{2}w(E_{N})+w(E'_{N}).
\]
Hence 
\[
w(E'_{N})\geq\frac{1}{2}w(E_{N}).
\]

Now we continue our argument as follows. By the definition of $E'_{N}$
and the self-adjointness of $\AS$, 
\begin{align*}
\lambda w(E'_{N}) & \leq\int_{E'_{N}}\AS f(x)\frac{w(x)}{u(x)}\dd x\\
 & =\int_{\R^{n}}f(x)\AS\!\left(\frac{w\chi_{E'_{N}}}{u}\right)(x)\dd x\\
 & =\int_{\R^{n}}f(x)\AS(g_{N}\chi_{F_{N}})(x)\dd x.
\end{align*}
Now we apply Lemma \ref{lem:maxprinciple} and we have that
\[
\int_{\R^{n}}f(x)\AS(g_{N}\chi_{F_{N}})(x)\dd x\leq a\int_{\R^{n}}f(x)\dd x.
\]
Gathering the estimates above
\[
w(E'_{N})\leq a\int_{\R^{n}}\frac{f(x)}{\lambda}\dd x
\]
Since $w(E'_{N})\geq w(E_{N})/2$ and $a=2\kappa$, we conclude that
\[
w(E_{N})\le2w(E_{N}')\leq4\kappa\int_{\R^{n}}\frac{f(x)}{\lambda}\dd x
\]
and letting $N\rightarrow\infty$ allows us to conclude by monotone
convergence that 
\[
w(\{\AS f>\lambda u\})=w(E)\leq4\kappa\int_{\R^{n}}\frac{f(x)}{\lambda}\dd x
\]

Finally, note that for a countable sparse family $\mathcal{S}$, we
may consider an increasing sequence of sparse families $\mathcal{S}_{m}\subset\mathcal{S}$
such that $A_{\mathcal{S}_{m}}f\rightarrow A_{\mathcal{S}}f$. Note
that since $A_{\mathcal{S}_{m}}f\leq A_{\mathcal{S}}f$ the hypothesis
\eqref{eq:abstractprimal} remains true for the same constant $\kappa$.
Applying the finite families case we have just established, and then
passing to the limit by monotone convergence of the increasing with
$m$ level sets completes the proof.
\end{proof}

\subsection{Proof of Theorem \ref{thm:Transf}}

We begin recalling that if $T$ is a Calderón-Zygmund or a maximal
Calderón-Zygmund operator then there exist $3^{n}$ dyadic lattices
$\mathcal{D}_{j}$ and $3^{n}$ $\eta$-sparse families $\mathcal{S}_{j}\subset\mathcal{D}_{j}$
such that 
\begin{equation}
|Tf(x)|\lesssim\sum^{3^{n}}_{j=1}A_{\mathcal{S}_{j}}(|f|).\label{eq:sparsecontrol}
\end{equation}
We remit the reader, among other references, to \cite{LN,L} for the
proof of this result. Observe that as it was shown in \cite{LN} we
may change the sparse constant for a larger one just changing the
number of sparse operators involved in case of need. Hence we do not
take care about the constant. Note that then, for a pair of weights
$u$ and $w$ in the conditions of the hypothesis, since 
\[
\|A_{\mathcal{S}}(|f|)\|_{L^{1,\infty}(w)}\leq\kappa_{u,w,\eta}\|f\|_{L^{1}(u)}
\]
uniformly on $\mathcal{S}$, we have that 
\[
\left\Vert \frac{A_{\mathcal{S}}(|f|)}{u}\right\Vert _{L^{1,\infty}(w)}\leq4\kappa_{u,w,\eta}\|f\|_{L^{1}}
\]
Combining this estimate with \eqref{eq:sparsecontrol} leads to the
desired conclusion.

\subsection{Corollaries of the transference principle }\label{subsec:CorollariesTP}

Our corollaries will allow us to derive some new dual Muckenhoupt-Wheeden
estimates from known sparse uniform estimates in the literature as
we announced in Section \ref{sec:IMR}. We begin recalling the following
estimate.
\begin{lem}[\cite{DSLR}]
et $\Phi$ be a Young function such that $c_{\Phi}=\sum^{\infty}_{k=1}\frac{1}{\overline{\Phi}^{-1}(2^{2^{k}})}<\infty$.
For every sparse operator $A_{\mathcal{S}},$ where $\mathcal{S}$
is a $\eta$-sparse family,
\[
\|A_{\mathcal{S}}f\|_{L^{1,\infty}(w)}\lesssim_{\eta}\bigl(1+c_{\Phi}\bigr)\|f\|_{L^{1}(M_{\Phi}w)}.
\]
and the constant is uniform on $\mathcal{S}$ for a fixed parameter
$\eta$.
\end{lem}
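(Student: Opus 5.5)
Since this is the estimate of Domingo-Salazar, Lacey and Rey, I would follow the sparse-layer decomposition from \cite{DSLR}. The ingredients are a Calderón–Zygmund type exceptional set, a splitting of $\mathcal S$ by the size of the averages $\avg f_Q$, an exponential (John–Nirenberg type) decay for the overlap of sparse cubes, and the generalized Hölder inequality $\int_Q h\,w\le 2|Q|\,\|h\|_{\overline{\Phi},Q}\|w\|_{\Phi,Q}$.

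\textbf{Reduction and exceptional set.} By homogeneity it suffices to bound $w(\{A_{\mathcal S}f>C(1+c_\Phi)\})\lesssim\int f\,M_\Phi w$ for $f\ge0$. By monotone convergence we may take $\mathcal S$ finite. Cubes with $\avg f_Q>1$ lie in $\{M^{\mathcal D}f>1\}=\bigcup_j P_j$, a disjoint union of maximal dyadic cubes. The bound $w(P_j)\le |P_j|\inf_{P_j}Mw\le \int_{P_j}f\,M_\Phi w$ (using $M\le M_\Phi$ up to constants) lets us discard this set. For $k\ge0$ put $\mathcal S_k=\{Q\in\mathcal S:\avg f_Q\in(2^{-k-1},2^{-k}]\}$, so that $A_{\mathcal S}f\le\sum_k 2^{-k}N_k$ with $N_k=\sum_{Q\in\mathcal S_k}\chi_Q$.

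\textbf{Layer estimate.} Choose thresholds $t_k=2^k\varepsilon_k$ with $\sum_k\varepsilon_k\lesssim 1+c_\Phi$. Then
\[
\{A_{\mathcal S}f>C(1+c_\Phi)\}\subset\bigcup_k\{N_k>t_k\}.
\]
Let $\mathcal S_k^{*}$ be the maximal cubes of $\mathcal S_k$. For each $Q\in\mathcal S_k^*$, sparseness gives $|G_Q|\le Ce^{-ct_k}|Q|$, where $G_Q=\{x\in Q:N_k(x)>t_k\}$. Applying Hölder with $h=\chi_{G_Q}$ and $\|\chi_G\|_{\overline\Phi,Q}=1/\overline\Phi^{-1}(|Q|/|G|)$ gives
\[
w(G_Q)\lesssim\frac{|Q|\,\|w\|_{\Phi,Q}}{\overline\Phi^{-1}(c\,e^{ct_k})}\lesssim\frac{2^{k}}{\overline\Phi^{-1}(c\,e^{ct_k})}\int_Q f\,M_\Phi w .
\]
Here we used $|Q|\le2^{k+1}\int_Qf$ and $\|w\|_{\Phi,Q}\le\inf_Q M_\Phi w$. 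The cubes of $\mathcal S_k^*$ are disjoint, so summing over them yields
\[
w(\{N_k>t_k\})\lesssim \frac{2^k}{\overline\Phi^{-1}(ce^{ct_k})}\int f\,M_\Phi w .
\]

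\textbf{Main obstacle: the bookkeeping in $k$.} We need $\sum_k 2^k/\overline\Phi^{-1}(e^{c2^k\varepsilon_k})\lesssim 1+c_\Phi$ while keeping $\sum\varepsilon_k\lesssim1+c_\Phi$. A naive choice ($\varepsilon_k$ constant) fails because of the factor $2^k$. My first attempt is to choose $\varepsilon_k\approx k2^{-k}+\dots$, so that $e^{c2^k\varepsilon_k}$ absorbs $2^k$. I would use that $\overline\Phi^{-1}(t)/t$ is nonincreasing, so that $\overline\Phi^{-1}(2^{2^m}\cdot 2^{2^m})\le 2^{2^m}\,\overline\Phi^{-1}(2^{2^m})$. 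I would also group the indices $k$ into blocks $2^m\le k<2^{m+1}$, making the exponent double-exponential, $e^{c2^k\varepsilon_k}\gtrsim 2^{2^{m}}$. This should reduce the sum to $\sum_m 1/\overline\Phi^{-1}(2^{2^m})=c_\Phi$. If this regrouping does not close, I would fall back on DSLR's finer decomposition of each $\mathcal S_k$ into stopping layers of depth $\sim 2^{m}$, where the double-exponential scale appears naturally.
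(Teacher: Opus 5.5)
\paragraph{The gap: the sum over $k$ diverges.} Your per-level estimate $w(\{N_k>t_k\})\lesssim 2^k\,\overline\Phi^{-1}(c\,e^{ct_k})^{-1}\int f\,M_\Phi w$ is correct. The step you call bookkeeping, however, cannot be carried out for any choice of $\varepsilon_k$.

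\begin{itemize}
\item The inclusion $\{A_{\mathcal S}f>C\}\subset\bigcup_k\{N_k>t_k\}$ forces $\sum_k\varepsilon_k<\infty$, hence $\varepsilon_k\to0$.
\item Every Young function relevant here satisfies $\Phi(t)\lesssim t\log(e+t)$. Hence $\overline\Phi(s)\gtrsim e^{cs}$ and $\overline\Phi^{-1}(u)\lesssim\log(e+u)$.
\item So the $k$-th term of your series is $\gtrsim 2^k/(1+2^k\varepsilon_k)\gtrsim\min\{2^k,1/\varepsilon_k\}\to\infty$.
\end{itemize}

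This already fails for $\Phi(t)=t\log(e+t)$, where $c_\Phi<\infty$ and the lemma is classical. The regrouping cannot rescue it. The inequality $\overline\Phi^{-1}(st)\le s\,\overline\Phi^{-1}(t)$ is an upper bound on the denominator, which is the wrong direction. Concretely, with $\varepsilon_k\approx k2^{-k}$ you get $t_k\approx k$, $\overline\Phi^{-1}(e^{ck})\lesssim k$, and terms $\gtrsim 2^k/k$.

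Even if the sum closed, a threshold $C(1+c_\Phi)$ combined with a layer sum of size $1+c_\Phi$ would give $(1+c_\Phi)^2$ after homogeneity, not $1+c_\Phi$.

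The loss is structural. A union bound over $k$ charges a full unit of $w$-measure to every level. Hölder on the maximal cubes only uses $\int_P f\approx 2^{-k}|P|$, and that is where the fatal factor $2^k$ comes from.

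\paragraph{Missing idea 1: keep $2^{-k}$ and work in $L^1(w)$.} Estimate the deep part in $L^1(w)$ instead of through level sets. Write
\[
2^{-k}N_k\le 2^{-k}\tau_k+2^{-k}(N_k-\tau_k)_+,\qquad \textstyle\sum_k2^{-k}\tau_k\lesssim1,
\]
for instance with $\tau_k=\lceil 2^{k/2}\rceil$. The first piece is then pointwise bounded. Off your exceptional set $\bigcup_j P_j$, Chebyshev gives $w(\{A_{\mathcal S}f>C\}\setminus\bigcup_jP_j)\lesssim\sum_k2^{-k}\int(N_k-\tau_k)_+\,w$.

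\paragraph{Missing idea 2: apply Hölder on every cube of $\mathcal S_k$.} Do not apply Hölder only on the maximal cubes. For $\Phi$ below $L\log L$, $\overline\Phi$ grows faster than exponentially while $N_k$ has only exponential tails, so Hölder against $(N_k-\tau_k)_+$ on $P$ is useless. Instead:

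\begin{itemize}
\item Use the stopping layers of $\mathcal S_k$, and let $D_\tau(Q)$ be the union of the descendants of $Q$ in $\mathcal S_k$ lying exactly $\tau$ layers below $Q$.
\item Then $\int(N_k-\tau_k)_+w=\sum_{Q\in\mathcal S_k}w(D_{\tau_k}(Q))$ and $|D_\tau(Q)|\le C2^{-c\tau}|Q|$.
\item Hence $w(D_{\tau_k}(Q))\lesssim|Q|\,\|w\|_{\Phi,Q}/\overline\Phi^{-1}(c\,2^{c\tau_k})$.
\item Finally, $\sum_{Q\in\mathcal S_k}|Q|\,\|w\|_{\Phi,Q}\lesssim_\eta 2^k\int f\,M_\Phi w$. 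To see this, choose $R=R(\eta)$ with $|D_R(Q)|\le|Q|/4$. Then each $Q\in\mathcal S_k$ has $\int_{Q\setminus D_R(Q)}f\ge2^{-k-2}|Q|$. The sets $Q\setminus D_R(Q)$ are pairwise disjoint within each residue class of layers mod $R$.
\end{itemize}

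This yields the constant $\sum_k 1/\overline\Phi^{-1}(c\,2^{c2^{k/2}})\lesssim1+c_\Phi$. The double-exponential scale comes from the depth $\tau_k\approx 2^{k/2}$ at which Hölder is applied to each cube. It does not come from a regrouping in $k$ that absorbs a factor $2^k$; no such regrouping exists.
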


Combining this estimate with Theorem \ref{thm:Transf} we obtain the
following Corollary.
\begin{cor}
Let $\Phi$ be a Young function such that $c_{\Phi}=\sum^{\infty}_{k=1}\frac{1}{\overline{\Phi}^{-1}(2^{2^{k}})}<\infty$.
If $T$ is a Calderón-Zygmund operator, 
\[
\left\Vert \frac{Tf}{M_{\Phi}w}\right\Vert _{L^{1,\infty}(w)}\lesssim(1+c_{\Phi})\|f\|_{L^{1}}.
\]
\end{cor}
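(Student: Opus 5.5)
This corollary follows by chaining the sparse bound of \cite{DSLR} recalled in the preceding lemma with Theorem~\ref{thm:Transf}, taking $u=M_{\Phi}w$. We only need to check the hypotheses of Theorem~\ref{thm:Transf} and keep track of the constant.

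\textbf{Step 1: sparse hypothesis.} Fix $\eta\in(0,1)$, for instance the sparseness parameter given by the sparse domination \eqref{eq:sparsecontrol}. By the lemma from \cite{DSLR}, for every dyadic lattice $\mathcal{D}$, every $\eta$-sparse $\mathcal{S}\subset\mathcal{D}$ and every non-negative $g$,
\[
\|A_{\mathcal{S}}g\|_{L^{1,\infty}(w)}\lesssim_{\eta}(1+c_{\Phi})\|g\|_{L^{1}(M_{\Phi}w)},
\]
uniformly in $\mathcal{S}$. So the hypothesis of Theorem~\ref{thm:Transf} holds with $\kappa_{u,w,\eta}\simeq_{\eta}1+c_{\Phi}$.

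\textbf{Step 2: admissibility of $u=M_{\Phi}w$.} We must check that $0<u<\infty$ $w$-almost everywhere.
\begin{itemize}
\item \emph{Positivity.} If $w\not\equiv0$, then $\|w\|_{\Phi,Q}>0$ for every cube $Q$ that is large enough to meet the support of $w$ in positive measure. Such cubes contain any given $x$, so $M_{\Phi}w>0$ everywhere. If $w\equiv0$, both sides of the inequality vanish.
\item \emph{Finiteness.} If $M_{\Phi}w=\infty$ on a set $Z$, then $\frac{Tf}{u}=0$ on $Z$, with the convention $1/\infty=0$. Hence $Z$ does not contribute to the level sets and may be discarded. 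Concretely, the proof of Theorem~\ref{thm:transferSparse} already truncates to $\{1/N\le u\le N\}$, and $\{u=\infty\}$ never enters the set $E$.
\end{itemize}
This is the only point needing care, and it is mild.

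\textbf{Step 3: conclusion.} Theorem~\ref{thm:Transf} now gives, for $f\ge0$,
\[
\Bigl\|\frac{Tf}{M_{\Phi}w}\Bigr\|_{L^{1,\infty}(w)}\lesssim_{n,T,\eta}(1+c_{\Phi})\|f\|_{L^{1}}.
\]
For general $f$, we use the pointwise bound $|Tf|\lesssim\sum_j A_{\mathcal{S}_j}(|f|)$ from \eqref{eq:sparsecontrol}. This reduces the estimate to $|f|$, since the proof of Theorem~\ref{thm:Transf} actually passes through the sparse operators applied to $|f|$. The $3^{n}$ summands are handled by the quasi-triangle inequality in $L^{1,\infty}(w)$, which costs only a dimensional constant. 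Since $\eta$ is fixed by the domination, the implicit constant depends only on $n$ and $T$.
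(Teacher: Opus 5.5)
Your proposal is correct and is the paper's argument: the paper obtains the corollary by feeding the uniform sparse bound of \cite{DSLR} (so $\kappa_{u,w,\eta}\simeq_{\eta}1+c_{\Phi}$) into Theorem~\ref{thm:Transf} with $u=M_{\Phi}w$. Your Step~2 fills in details the paper leaves implicit: it checks that $M_{\Phi}w>0$ everywhere when $w\not\equiv0$, and that $\{M_{\Phi}w=\infty\}$ is harmless because the proof of Theorem~\ref{thm:transferSparse} only uses $u$ on $\{1/N\le u\le N\}$.
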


Note that the result above works choosing in particular, for $\rho>0$,
\begin{align*}
\Phi(t) & =t\log(e+t)^{\rho}\\
\Phi(t) & =t\log(e+\log(e+t))^{1+\rho}\\
\Phi(t) & =t\log(e+\log(e+t))\log(e+\log(e+\log(e+t)))^{1+\rho}.
\end{align*}
 In all those cases, if additionally $\rho\leq1$, then $c_{\Phi}\simeq\frac{1}{\rho}$.

Now we revisit the following estimate. 
\begin{lem}[\cite{Ra}]
Let $\varepsilon:[1,\infty)\rightarrow[1,\infty)$ be an increasing
function such that $\kappa_{\varepsilon}=\sum^{\infty}_{k=-1}\frac{1}{\varepsilon(2^{2^{k}})}<\infty$.
For every sparse operator $A_{\mathcal{S}},$ where $\mathcal{S}$
is a $\eta$-sparse family,
\[
\|A_{\mathcal{S}}f\|_{L^{1,\infty}(w)}\lesssim_{\eta}\kappa_{\varepsilon}\|f\|_{L^{1}(M_{\varepsilon}w)}.
\]
and the constant is uniform on $\mathcal{S}$ for a fixed parameter
$\eta$.
\end{lem}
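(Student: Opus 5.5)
Since this estimate is quoted from \cite{Ra}, I would not rely on anything beyond sparse domination machinery. I would reproduce the argument in the style of Domingo-Salazar, Lacey and Rey, with the entropy bump $\varepsilon$ in place of the Orlicz bump. By monotone convergence (exactly as at the end of the proof of Theorem \ref{thm:transferSparse}) it suffices to treat finite $\eta$-sparse families $\mathcal{S}$ and $f\geq0$ bounded with compact support. By homogeneity it suffices to prove
\[
w(\{A_{\mathcal{S}}f>4\})\lesssim_{\eta}\kappa_{\varepsilon}\int f\,M_{\varepsilon}w .
\]

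\textbf{Step 1 (removing large averages).} Set $G=\{Mf>1\}$. By the Fefferman--Stein inequality \eqref{eq:FS} we have $w(G)\lesssim\int f\,Mw$. Since $L_{1}\geq1$ and $\varepsilon\geq1$, we also have $Mw\leq M_{\varepsilon}w$, so $w(G)\lesssim\int f\,M_{\varepsilon}w$. Outside $G$ only cubes with $\avg{f}_{Q}\leq1$ contribute. I would split these into the families $\mathcal{S}_{k}=\{Q\in\mathcal{S}:2^{-k-1}<\avg{f}_{Q}\leq2^{-k}\}$, $k\geq0$, and further split each $\mathcal{S}_{k}$ by entropy:
\[
\mathcal{S}_{k,j}=\{Q\in\mathcal{S}_{k}:\rho_{w}(Q)\in[2^{2^{j}},2^{2^{j+1}})\},\qquad j\geq-1 .
\]
The target is a bound of the form
\[
w\Bigl(\Bigl\{\sum_{Q\in\mathcal{S}_{k,j}}\avg{f}_{Q}\chi_{Q}>c_{k,j}\Bigr\}\Bigr)\lesssim\frac{1}{\varepsilon(2^{2^{j}})}\int f\,M_{\varepsilon}w,
\]
with thresholds satisfying $\sum_{k}c_{k,j}\leq2$ for each $j$. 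Summing over $j$ then produces exactly $\kappa_{\varepsilon}=\sum_{j\ge-1}1/\varepsilon(2^{2^{j}})$.

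\textbf{Step 2 (the entropy packing estimate).} The key input is that for any cube $R$ and any sparse $\mathcal{S}'$,
\[
\sum_{Q\in\mathcal{S}',\,Q\subset R}w(Q)\lesssim_{\eta}\int_{R}M(w\chi_{R})=\rho_{w}(R)\,w(R).
\]
This follows by writing $w(Q)\leq\eta^{-1}\avg{w\chi_{R}}_{Q}|E_{Q}|\leq\eta^{-1}\int_{E_{Q}}M(w\chi_{R})$ and using disjointness of the $E_{Q}$. Consequently, inside a maximal cube $R$ of $\mathcal{S}_{k,j}$, the stacking function $\sum_{Q\in\mathcal{S}_{k,j}, Q\subset R}\chi_{Q}$ has $w$-average over $R$ at most $\lesssim\rho_{w}(R)\le 2^{2^{j+1}}$. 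To get the needed logarithmic gain, rather than a linear loss, I would run a stopping-time/John--Nirenberg-type argument on the $w$-distribution of this stacking function. Entropy is controlled at every scale, because subcubes in $\mathcal{S}_{k,j}$ again have $\rho_{w}\lesssim2^{2^{j+1}}$. This should yield exponential decay of $w(\{\text{stacking}>m\})$ beyond height $m\approx 2^{j}$, i.e.\ $\log\rho$ stacked layers. Combined with Chebyshev at level $c_{k,j}\approx2^{-k}\cdot(\text{number of layers})$, this gives
\[
w(\ldots)\lesssim 2^{-k}\sum_{R}\frac{w(R)}{|R|}\,L_{1}(\rho_{w}(R))\,|R|\cdot\frac{1}{c_{k,j}} .
\]
Finally, $\frac{w(R)}{|R|}L_{1}(\rho_{w}(R))\varepsilon(\rho_{w}(R))\leq\inf_{R}M_{\varepsilon}w$ and $2^{-k}|R|\lesssim\int_{R}f$; the sparse disjointness of the top cubes $R$ then closes the bound against $\int f\,M_{\varepsilon}w$ with a factor $1/\varepsilon(2^{2^{j}})$.

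\textbf{Main obstacle.} The delicate step is Step 2: converting the packing bound, which only controls the \emph{average} stacking by $\rho_{w}$, into a distributional bound in which only $L_{1}(\rho_{w})\approx\log\rho_{w}$ layers cost anything. It is also delicate to choose $c_{k,j}$ so that the sum over $k$ stays bounded while each piece still absorbs the $L_{1}(\rho)$ factor. I would first try the approach used in \cite{DSLR}: iterate the packing estimate on successive stopping generations, so that each generation loses a fixed fraction of $w$-mass after $\sim\log\rho$ layers. I would then verify that the doubly exponential grouping $\rho\sim2^{2^{j}}$ makes the geometric sums over $k$ and $j$ converge with the stated constant $\kappa_{\varepsilon}$.
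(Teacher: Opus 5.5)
The paper gives no proof of this lemma; it is quoted from \cite{Ra}. Your argument therefore has to stand on its own, and as written it does not close. Your reductions are fine: removing $G=\{Mf>1\}$ by Fefferman--Stein, and $Mw\le M_\varepsilon w$. The packing bound $\sum_{Q\subset R}w(Q)\lesssim_\eta\rho_w(R)w(R)$ is also fine. The problems are in how the pieces are assembled and in what you expect the packing bound to give.

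\textbf{The pieces cannot be summed.} To cover $\{A_{\mathcal S}f>4\}\setminus G$ by the sets $\{A_{\mathcal S_{k,j}}f>c_{k,j}\}$ you need $\sum_{k,j}c_{k,j}\le 4$, not $\sum_k c_{k,j}\le 2$ for each $j$. Moreover, a per-piece bound $\varepsilon(2^{2^j})^{-1}\int fM_\varepsilon w$ that does not decay in $k$ cannot be summed over $k\ge0$. This is structural:
\begin{itemize}
\item Any Chebyshev bound at level $c_{k,j}$ is at least $c_{k,j}^{-1}2^{-k}\sum_R w(R)$ over the top cubes $R$.
\item If it is controlled only through $\avg{w}_R L_1(\rho_w(R))\varepsilon(\rho_w(R))\le\inf_R M_\varepsilon w$, you are forced to take $c_{k,j}\gtrsim 1/L_1(2^{2^j})$ for every $k$, which is not summable.
\item Your choice $c_{k,j}\approx 2^{-k}2^j$ leaves an uncancelled factor $2^{k-j}$, and it also gives $\sum_k c_{k,j}\approx 2^{j+1}$.
\end{itemize}
Weak-type bounds do not add. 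The $j$-pieces must be combined through an $L^1(w)$ estimate, not through a union of level sets.

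\textbf{The Step 2 mechanism is false.} Packing with constant $\rho$ gives exponential decay of the $w$-distribution of the stacking function only beyond height $\approx\rho$, not $\approx\log\rho$. Counterexample: take $w(x)=x^{\theta-1}\chi_{(0,1)}(x)$, $Q_i=[0,2^{-i})$, and $f=\theta\chi_{[0,1)}$ with $\theta=2^{-k}$.
\begin{itemize}
\item Every $Q_i$ has $\avg{f}_{Q_i}=2^{-k}$ and, by scaling, $\rho_w(Q_i)=1/\theta$.
\item Yet $w(\{\sum_i\chi_{Q_i}>m\})=2^{-m\theta}w(Q_0)$, so $w(\{A_{\mathcal S}f>1\})=\frac{1}{2\theta}$.
\item The top-cube quantity is $\int_{Q_0}f\,\avg{w}_{Q_0}L_1(\rho_w(Q_0))\varepsilon(\rho_w(Q_0))=L_1(1/\theta)\varepsilon(1/\theta)$. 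For $\varepsilon=\varepsilon_\delta$ and small $\theta$ this is far smaller than $\frac{1}{2\theta}$.
\end{itemize}
The lemma survives only because $\int fM_\varepsilon w$ sees the growth of $\avg{w}_{Q_i}$ along the chain.

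\textbf{What is missing} is an estimate that uses every cube, not just the top ones. One route that works:
\begin{itemize}
\item Split $\mathcal S$ into $O_\eta(1)$ subfamilies with Carleson constant at most $5/4$. Then inside $\mathcal S_{k,j}$ the maximal proper subcubes of $Q$ cover at most $|Q|/4$.
\item Hence $f$ has mass $>2^{-k-2}|Q|$ on the pairwise disjoint sets $Q\setminus\bigcup\mathrm{ch}(Q)$. This gives, uniformly in $k$,
\[
\sum_{Q\in\mathcal S_{k,j}}\avg{f}_Q\,w(Q)\lesssim_\eta\frac{1}{L_1(2^{2^j})\,\varepsilon(2^{2^j})}\int fM_\varepsilon w .
\]
\item Set $\rho_j=2^{2^{j+1}}$. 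Apply a single Chebyshev to the sum of these bounds over the $\approx\log\rho_j\approx L_1(2^{2^j})$ values of $k$ with $2^k\le\rho_j(\log\rho_j)^3$. This is where the $L_1$ factor is actually spent. What remains is $\varepsilon(2^{2^j})^{-1}$, which sums over $j$ to $\kappa_\varepsilon$.
\item Use level sets only for $2^k>\rho_j(\log\rho_j)^3$. There your packing bound, in the form of a weighted John--Nirenberg inequality with rate $1/\rho_j$, overwhelms the loss $\sum_R w(R)\le 2^{k+1}\sum_R\avg{w}_R\int_R f$. The thresholds can then be chosen summable in both $k$ and $j$.
\end{itemize}
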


Again, combining this result with Theorem \ref{thm:Transf} yields
the following Corollary. 
\begin{cor}
Let $\varepsilon:[1,\infty)\rightarrow[1,\infty)$ be an increasing
function such that $\kappa_{\varepsilon}=\sum^{\infty}_{k=-1}\frac{1}{\varepsilon(2^{2^{k}})}<\infty$.
If $T$ is a Calderón-Zygmund operator, 
\[
\left\Vert \frac{Tf}{M_{\varepsilon}w}\right\Vert _{L^{1,\infty}(w)}\lesssim\kappa_{\varepsilon}\|f\|_{L^{1}}.
\]
\end{cor}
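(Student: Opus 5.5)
The corollary should follow by combining the preceding Lemma from \cite{Ra} with Theorem \ref{thm:Transf}, in the same way as the Orlicz corollary just above. I would fix $\eta\in(0,1)$ to be the sparseness parameter given by the sparse domination \eqref{eq:sparsecontrol} of $T$. As remarked in the proof of Theorem \ref{thm:Transf}, this can be taken to be any fixed value at the cost of more sparse operators.

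First, I set $u=M_{\varepsilon}w$ and check the standing hypothesis of Theorem \ref{thm:Transf}, namely $0<u<\infty$ $w$-a.e. Positivity is easy. Since $\varepsilon\geq1$ and $L_1\geq1$, we have $M_\varepsilon w\geq Mw$, which is strictly positive wherever $w$ is not identically zero. The delicate point is finiteness, because $M_\varepsilon w$ may be infinite on large sets, as Theorem \ref{thm:finiteOrlInfEnt} shows. I handle this by reading the conclusion in the natural convention $Tf/\infty=0$. Alternatively, one can observe that the proof of Theorem \ref{thm:transferSparse} only uses the sets $\{1/N\leq u\leq N\}$. On $\{u=\infty\}$ the quotient vanishes, so those points never enter the level set. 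On $\{u=0\}\cap\{w>0\}$ we have $w$-measure zero, since $M_\varepsilon w\geq Mw\geq w$ a.e. by Lebesgue differentiation. So the transference argument goes through verbatim with this convention.

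Second, the Lemma from \cite{Ra} supplies
\[
\|A_{\mathcal S}g\|_{L^{1,\infty}(w)}\leq C_\eta\kappa_\varepsilon\|g\|_{L^1(M_\varepsilon w)}
\]
uniformly over $\eta$-sparse families in any dyadic lattice. This is exactly the hypothesis of Theorem \ref{thm:Transf} with $\kappa_{u,w,\eta}=C_\eta\kappa_\varepsilon$. Applying the theorem to $|f|$ and using $|Tf|\lesssim\sum_j A_{\mathcal S_j}|f|$ gives
\[
\left\|\frac{Tf}{M_\varepsilon w}\right\|_{L^{1,\infty}(w)}\lesssim\kappa_\varepsilon\|f\|_{L^1}.
\]
The implicit constant depends on $n$, $T$ and $\eta$. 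For the sum of the $3^n$ sparse pieces I use the quasi-triangle inequality of $L^{1,\infty}(w)$, which costs only a dimensional constant.

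The main (minor) obstacle is the first step: dealing with points where $M_\varepsilon w=\infty$, and making sure the uniformity in $\mathcal S$ in \cite{Ra} is for a fixed $\eta$ compatible with the one coming from the sparse domination. Everything else is a direct substitution.
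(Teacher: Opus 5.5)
Your proposal is correct and follows the paper's route exactly: you take $u=M_{\varepsilon}w$, feed Rahm's uniform sparse bound into Theorem~\ref{thm:Transf} as its hypothesis with $\kappa_{u,w,\eta}\lesssim_{\eta}\kappa_{\varepsilon}$, and conclude via the sparse domination \eqref{eq:sparsecontrol}. Your extra checks are valid details that the paper leaves implicit: $u\geq Mw\geq w$ gives $u>0$ $w$-a.e., and the truncation $\{1/N\leq u\leq N\}$ together with the convention $Tf/\infty=0$ handles the case $M_{\varepsilon}w=\infty$, which by Theorem~\ref{thm:finiteOrlInfEnt} can occur on sets of positive $w$-measure.
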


This result can be applied, for instance, to the case
\[
\varepsilon(t)=\log(e+\log(e+t))\log(e+\log(e+\log(e+t)))^{1+\rho}\qquad\rho>0.
\]
In that case, if additionally $\rho\leq1$, $\kappa_{\varepsilon}\simeq\frac{1}{\rho}$.
Note that up until now no dual Muckenhoupt-Wheeden estimate had been
obtained for Rahm entropy maximal functions.

\section*{Declaration of use of AI}

The ideas for the main results were obtained in a series of chats
with ChatGPT 5.6. The author verified, and rewrote all parts of the
paper influenced by AI-generated material. 

\section*{Acknowledgement}

The author was partially supported by the Spanish Ministry of Science
and Innovation through the project PID2022-136619NB-I00 funded by
MCIN/AEI/10.13039/501100011033/FEDER, UE and by Junta de Andalucía
through the project FQM-354


\begin{thebibliography}{10}
\bibitem{CLO} M.~Caldarelli, A.~K. Lerner and S.~Ombrosi, \emph{On
a counterexample related to weighted weak type estimates for singular
integrals}, Proc. Amer. Math. Soc. \textbf{145} (2017), no.~7, 3005--3012.

\bibitem{CO} M.~Cao and A.~Olivo, \emph{Two-weight extrapolation
on function spaces and applications}, Math. Nachr. \textbf{297} (2024),
2399--2444.

\bibitem{CUP}D. Cruz-Uribe, C. Pérez, \emph{Two-weight extrapolation
via the maximal operator}. J. Funct. Anal. \textbf{174} (2000) no.
1, 1--17.

\bibitem{DSLR} C.~Domingo-Salazar, M.~T. Lacey and G.~Rey, \emph{Borderline
weak type estimates for singular integrals and square functions},
Bull. Lond. Math. Soc. \textbf{48} (2016), no.~1, 63--73.

\bibitem{FS}C. Fefferman, E. M. Stein, \emph{Some maximal inequalities}
Amer. J. Math. \textbf{93} (1971), 107--115.

\bibitem{H}T. P. Hytönen, \emph{The sharp weighted bound for general
Calderón-Zygmund operators.} Ann. of Math. (2) \textbf{175} (2012),
no. 3, 1473--1506.

\bibitem{LS}M.T. Lacey, S. Spencer, \emph{On entropy bumps for Calderón-Zygmund
operators} Concr. Oper. \textbf{2} (2015), no. 1, 47--52.

\bibitem{L}A.K. Lerner, \emph{On pointwise estimates involving sparse
operators} New York J. Math. \textbf{22} (2016), 341--349.

\bibitem{LN}A.K. Lerner, F. Nazarov, \emph{Intuitive dyadic calculus:
the basics}, Expo. Math. \textbf{37} (2019), no. 3, 225--265.

\bibitem{LNO}A.~K. Lerner, F. Nazarov, S.~Ombrosi, \emph{On the
sharp upper bound related to the weak Muckenhoupt-Wheeden conjecture}
Anal. PDE \textbf{13} (2020), no. 6, 1939--1954.

\bibitem{LOP} A.~K. Lerner, S.~Ombrosi and C.~Pérez, \emph{Weak
type estimates for singular integrals related to a dual problem of
}\emph{Muckenhoupt}\emph{--Wheeden}, J. Fourier Anal. Appl. \textbf{15}
(2009), 394--403.

\bibitem{LOPA1}A.~K. Lerner, S.~Ombrosi and C.~Pérez,\emph{ $A_{1}$
bounds for Calderón-Zygmund operators related to a problem of }\emph{Muckenhoupt}\emph{
and }\emph{Wheeden}. Math. Res. Lett. \textbf{16} (2009), no. 1, 149--156.

\bibitem{O}A. Osękowski, \emph{The dual conjecture of }\emph{Muckenhoupt}\emph{
and }\emph{Wheeden}. Rev. Mat. Iberoam. 37 (2021), no. 6, 2285--2308.

\bibitem{P}C.~Pérez, \emph{Weighted norm inequalities for singular
integral operators} J. London Math. Soc. (2) \textbf{49} (1994), no.
2, 296--308.

\bibitem{Ra}R. Rahm, \emph{Borderline weak-type estimates for sparse
bilinear forms involving $A_{\infty}$ maximal functions}, J. Math.
Anal. Appl. \textbf{504} (2021), no. 1, Paper No. 125372, 10 pp.

\bibitem{RS1}R. Rahm, S. Spencer, \emph{Entropy bump conditions for
fractional maximal and integral operators} Concr. Oper. \textbf{3}
(2016), no. 1, 112--121.

\bibitem{RS2}R. Rahm, S. Spencer, \emph{Entropy bumps and another
sufficient condition for the two-weight boundedness of sparse operators}
Israel J. Math. \textbf{223} (2018), no. 1, 197--204.

\bibitem{R}M. C.\emph{ }Reguera,\emph{ On Muckenhoupt-Wheeden conjecture},
Adv. Math. \textbf{227} (2011), no. 4, 1436--1450.

\bibitem{RT}M. C.\emph{ }Reguera, C. Thiele, \emph{The Hilbert transform
does not map $L^{1}(Mw)$ to $L^{1,\infty}(w)$} Math. Res. Lett.
\textbf{19} (2012), no. 1, 1--7.

\bibitem{TV}S. Treil, A. Volberg, \emph{Entropy conditions in two
weight inequalities for singular integral operators}. Adv. Math. \textbf{301}
(2016), 499--548.

\end{thebibliography}
\end{document}